\documentclass[12pt]{amsart}
\usepackage{amsmath, amssymb, amsthm}
\usepackage{epsfig}
\usepackage{graphicx}
\usepackage{url}
\usepackage{tikz}

\usetikzlibrary{matrix,arrows}

\makeindex

\newtheorem{theorem}{Theorem}

\newtheorem{lemma}{Lemma}

\newcommand{\tr}{\mathrm{tr}}
\setlength\topmargin{-.5 in}
\addtolength{\oddsidemargin}{-.5in}
\addtolength{\textheight}{1in}
\addtolength{\textwidth}{0.5 in}

\begin{document}

\title[Sharkovsky's Theorem -- combinatorial dynamics] {Sharkovsky's Theorem and one-dimensional combinatorial dynamics}
	
\author{Chris Bernhardt}

\maketitle

\section{Introduction}

Let $f$ be a continuous function from a space $X$ to itself. In what follows the underlying space $X$ will initially be taken to be the real line or the unit interval. In later sections the space $X$ will be taken to be circles, trees and graphs. The continuous function $f$ can be considered as {\em mapping} the space $X$ back to itself, and the function $f$ will often be called a {\em map}. The map $f$ is thought of as telling us where the points in $X$ get mapped to one unit of time later. Given a positive integer $n$, we will let $f^n$ denote the composition of $f$ with itself $n$ times. So $f^n(x)$ gives the position of $x$ after $n$ units of time. If a point $x\in X$ has the property 
 $f^n(x)=x$ we will say that $x$ is {\em periodic} with {\em period} $n$. If $n$ is the least positive integer such that  $f^n(x)=x$ we say that $x$ has {\em least period} $n$.

   Sharkovsky proved the following, now famous, theorem in 1962. It was published in \cite{S} in 1964.

\begin{theorem} Let $f: \mathbb{R} \to \mathbb{R}$ be continuous. If $f$ has a periodic point of least period $n$ then $f$ also has a periodic point of least period $m$ for any $m \triangleleft n$, where \[1 \triangleleft 2 \triangleleft 4 \triangleleft \dots \dots 28 \triangleleft 20 \triangleleft 12 \triangleleft \dots 14 \triangleleft 10 \triangleleft 6 \dots 7 \triangleleft 5 \triangleleft 3.\]

\end{theorem}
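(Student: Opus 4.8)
The plan is to translate the dynamical question into a purely combinatorial one about directed graphs attached to a single periodic orbit, and then to read off the forced periods from the lengths of loops in that graph. Since we care about one orbit at a time, it suffices to understand $f$ on the convex hull of the orbit. For closed intervals $I,J$ write $I \to J$ (``$I$ $f$-covers $J$'') to mean $J \subseteq f(I)$. Two foundational facts, both consequences of the Intermediate Value Theorem, drive everything: (i) if $I \to I$ then $f$ has a fixed point in $I$, obtained by applying the IVT to $f(x)-x$ using preimages of the endpoints of $I$; and (ii) given a loop $I_0 \to I_1 \to \cdots \to I_{k-1} \to I_0$, by pulling the covering relations back through the chain one finds a subinterval $K \subseteq I_0$ with $f^i(K) \subseteq I_i$ for $0 \le i < k$ and $f^k(K) = I_0 \supseteq K$, so (i) applied to $f^k$ yields a point $x$ with $f^k(x)=x$ and $f^i(x)\in I_i$. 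Thus a loop of length $k$ produces a point of period $k$.

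Next I would fix an orbit $P=\{x_1<\cdots<x_n\}$ of least period $n$, let $I_j=[x_j,x_{j+1}]$ be the $n-1$ basic intervals, and form the Markov graph with an edge $I_j \to I_k$ whenever $I_k \subseteq f(I_j)$; since the endpoints of each $f(I_j)$ are images of points of $P$, every $f(I_j)$ is a union of consecutive basic intervals, so the graph is well defined. The crucial refinement of (ii) is a least-period statement: a loop that is \emph{primitive} (not a concatenation of copies of a shorter loop) and visits genuinely distinct intervals produces a point of least period exactly $k$, the only dangerous smaller-period points being ones that lie in $P$ or that would force the loop to be imprimitive, and these can be excluded. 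Granting this, the theorem reduces to producing primitive loops of every length $m \triangleleft n$. As a warm-up, any orbit with $n \ge 2$ forces a fixed point, since $f(x_1)>x_1$ and $f(x_n)<x_n$ give a sign change of $f(x)-x$.

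The heart of the argument is the odd case, which is Stefan's theorem: an orbit of odd least period $n \ge 3$ has a Markov graph containing the \emph{Stefan subgraph}, namely a vertex carrying a self-loop together with a single long cycle through all $n-1$ vertices, arranged so that the graph has primitive loops of length $1$, of every even length up to $n-1$, and of every length $\ge n$, but of none of the odd lengths $3,5,\dots,n-2$. These are precisely the lengths $m \triangleleft n$, so the odd case follows; and it is enough to treat the \emph{smallest} odd period present, since the set of lengths forced by a larger odd period is contained in that forced by a smaller one. The general case $n=2^s q$ with $q$ odd is then handled by induction on $s$: passing to $g=f^{2^s}$, the given point has least period $q$ under $g$, so by the odd case $g$ realizes all periods $\triangleleft q$; a splitting argument, in which the period-$2^s$ orbit breaks into two blocks interchanged by $f$ with $f^2$ acting on a half, lets one both descend the tower $2^s \triangleright 2^{s-1} \triangleright \cdots \triangleright 1$ and lift the periods produced by $g$ back to genuine least periods of $f$.

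I expect two steps to be the real obstacles. The first is establishing the Stefan subgraph for an orbit of minimal odd period, equivalently showing that any odd orbit lacking this loop structure forces a strictly shorter odd orbit (contradicting minimality); this is the delicate combinatorial core. The second is the bookkeeping of least periods: both the loop lemma and the passage between $f$ and $f^{2^s}$ only control periods up to divisors and up to a factor of two, so throughout one must certify \emph{primitivity} of the loops used and track the $2$-adic valuation carefully, to be sure the periods produced are exactly the claimed ones rather than proper divisors.
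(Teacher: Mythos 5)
Your proposal is correct in outline, but it is a genuinely different proof from the one in the paper: it is the classical \v{S}tefan-cycle argument (as in \v{S}tefan, Block--Guckenheimer--Misiurewicz--Young, and Burns--Hasselblatt), whereas the paper deliberately avoids constructing the \v{S}tefan subgraph. Both proofs share the same foundation (covering relations, the Intermediate Value Theorem, and the correspondence between closed walks in the Markov graph of the orbit and periodic points), but after that they diverge. You isolate the minimal odd period present, prove its Markov graph contains the \v{S}tefan subgraph, read off all lengths $m \triangleleft n$ from its loops, and then handle $n=2^sq$ by passing to $f^{2^s}$ and lifting periods back up. The paper instead works with the \emph{oriented} Markov matrix of the connect-the-dots map $L_\theta$: the key inputs are that $OM(L_\theta^k)=OM(L_{\theta^k})$, that the trace is $-1$ whenever $\theta^k$ is fixed-point free, and that it is the identity matrix when $\theta^k=\mathrm{id}$. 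A negative diagonal entry of a power of $OM$ yields a negatively oriented closed walk, and negativity does double duty: such a walk cannot be the repetition of a shorter walk (a repeated subwalk traversed an even number of times would be positive), and its associated fixed point must lie in the interior of the basic interval, so it cannot be a point of the original orbit. This disposes, essentially for free, of exactly the two obstacles you flag at the end --- certifying primitivity and excluding spurious small periods --- at the cost of a separate final step (a horseshoe lemma together with an analysis of when the diagonal of $OM$ has more than one nonzero entry) to show that $2^k$ times an odd forces $2^{k+1}\cdot 3$. In short: your route buys the sharper structural picture of the \v{S}tefan cycle and stays entirely within interval covering arguments, but requires the delicate combinatorial core you correctly identify as the hard part; the paper's route replaces that core with trace computations that generalize readily to vertex maps on trees and graphs, which is the author's larger aim. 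Either route, carried out carefully, proves the theorem.
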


The left side of the ordering  consists of the powers of two in ascending order. For the other integers, first factor them into a product of $2$ to a power times an odd integer. If we have $2^km$ and $2^ln$, where $m$ and $n$ are odd integers that are strictly greater than one, the integer with the larger power of two is smaller with respect to the Sharkovsky ordering; if both integers have the same power of two, then the one with the larger odd factor is the smaller (i.e., to the left) with respect to the Sharkovsky ordering.
 
 This result is now very well-known and many papers in the {\em Monthly} have discussed it and it's various proofs \cite{BH}, \cite{BB}, \cite{Du1}, \cite{Du2}, \cite{E} and \cite{M}.

The proofs of this remarkable theorem have all noted that it is not just the periods of periodic orbits that must be considered, but also the order in which the points of a periodic orbit are permuted must be taken into account. This means that the proof is necessarily combinatorial in nature. This theorem is an example of a result in one-dimensional combinatorial dynamics.

In this paper we will give a proof of Sharkovsky's theorem. The proof was given in \cite{B1}, but here we will show how this proof naturally leads to looking at maps on graphs and to Sharkovsky-type theorems.
We should add that Sharkovsky also proved what is often called the converse, that given any positive integer, there exists a map that has periodic points with only the least periods that are smaller or equal in the Sharkovsky ordering than the given positive integer. Some authors refer to the theorem above as Sharkovsky's theorem, others include the converse. We will not prove the converse in this paper, but will comment on converses at the end.

One-dimensional combinatorial dynamics is a part of dynamical systems that looks at the dynamics of maps on graphs. It uses results from dynamical systems, graph theory, combinatorics and topology. There are many interesting results, but this is a fairly new area and there are undoubtedly many more interesting results that are waiting to be discovered. Many of the arguments are simple and accessible to good undergraduates. Indeed, this seems to be an excellent area for undergraduate research. The purpose of this article is to use the proof of Sharkovsky's theorem to give an introduction to the area and some of the recent results.

\section{Some tools in the proof of Sharkovsky's Theorem}

\subsection{Markov Graphs}
As stated above, there have been many proofs of Sharkovsky's Theorem, and there has been a long connection with the Monthly. The papers of Sharkovsky were not widely read in the west until Li and Yorke's renowned paper \cite{LY} {\em Period three implies chaos}, in which it was shown, among other things, that period three implied all other periods. Shortly after this Sharkovsky's theorem received widespread attention and \v{S}tefan published his proof in \cite{St}. All of the proofs at this stage were rather intricate, but a number of mathematicians felt that there should be a more straightforward method of argument. Straffin \cite{Str} gave a partial proof using directed graphs. Around 1980 several mathematicians (Block, Guckenheimer Misiurewicz and Young in \cite{BGMY} and Ho and Morris in \cite{HM}, but see \cite{M} and \cite{ALM} for more history) arrived at what is now regarded as the standard proof using directed graphs. We will look at the basic ideas. We start by looking at an example.

\begin{figure}
\begin{centering}

\begin{tikzpicture}[scale=5]

\draw[thick] (0,0) to node[below, font=\tiny]{$E_1$ }(.5 ,0);
\draw[thick] (.5,0) to node[below, font=\tiny]{$E_2$ }(1 ,0);
\draw[thick] (1,0) to node[below, font=\tiny]{$E_3$ }(1.5 ,0);

  \draw[fill=red] (0, 0) circle (.3pt);
    \draw[fill=red] (.5, 0) circle (.3pt);
      \draw[fill=red] (1, 0) circle (.3pt);
        \draw[fill=red] (1.5, 0) circle (.3pt);
        
        \draw {(0,-.05)} node[font=\tiny]{$1$};
        
        \draw {(.5,-.05)} node[font=\tiny]{$2$};
         \draw {(1,-.05)} node[font=\tiny]{$3$};
          \draw {(1.5,-.05)} node[font=\tiny]{$4$};
  
  \draw[dotted, thick, ->, blue] (0,.02) to [bend left=30] (.5,.02);
  \draw[dotted, thick, ->, blue] (.5, .02) to [bend left=30] (1,0.02);
    \draw[dotted, thick, ->, blue] (1, .02) to [bend left=30] (1.5,.02);
     \draw[dotted, thick, ->, blue] (1.5, -.02) to [bend left=30] (.02,-.02);

 \end{tikzpicture}
\caption{Map on [1,4]}
\end{centering}

\end{figure}
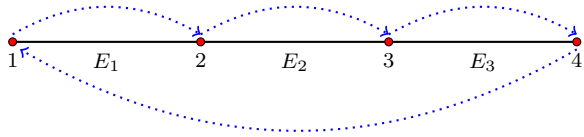 

Figure $1$ represents a map $f$ on the reals that sends $1$ to $2$, $2$ to $3$, $3$ to $4$ and $4$ back to $1$. Thus we have a periodic orbit with period $4$. The picture just shows the convex hull of the orbit, i.e., the closed interval $[1,4]$, because our arguments will be restricted to this set. The subinterval $[1,2]$ is denoted by $E_1$, $[2,3]$ is denoted $E_2$ and $[3,4]$ is denoted by $E_3$.

Associated to such a map we will draw a directed graph, called the {\em Markov Graph}. The vertices of this graph are denoted by $E_1$, $E_2$ and $E_3$. A directed edge will be drawn from vertex $E_i$ to $E_j$ if there is a closed subinterval $J$ of $E_i$ such that $f(J)=E_j$.  All the directed edges in Figure $2$ appear in the Markov Graph of any continuous map $f$ associated to the example in Figure $1$.

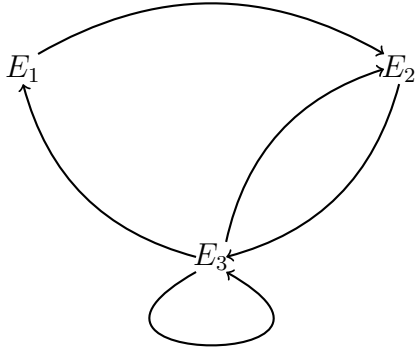
\begin{figure}
 
  \begin{centering}

\begin{tikzpicture}[scale=5]

  \draw[]{(0, 0)} node {$E_1$};
  \draw[]{(1,0)} node {$E_2$};
   \draw[]{(.5,-.5)} node {$E_3$};

  \draw[ thick, ->] (.04,.04) to [bend left=30] (.96,.04);
  \draw[ thick, ->] (1,-.04) to [bend left=30] (.54,-.5);
  \draw[ thick, ->] (.46, -.5) to [bend left=30] (0,-.04);
   \draw[ thick, ->] (.54, -.46) to [bend left=30] (.96,0);
    \draw[ thick, ->] (.46,-.54)..controls (0,-.8) and (1,-.8)..  (.54,-.54);

 \end{tikzpicture}

\caption{Markov Graph}

 \end{centering}
\end{figure}

We now look at walks in the Markov Graph. In particular, we are interested in walks that begin and end at the same vertex (closed walks) and that are non-repetitive in the sense that the walk cannot be written as the repetition of a shorter walk. So in the example, $E_3E_1E_2E_3E_2E_3$ is a non-repetitive walk from vertex $3$ to vertex $3$ of length $5$. The walk $E_3E_2E_3E_2E_3$ is a walk from $E_3$ to itself, but it is repetitive being a repetition of $E_3E_2E_3$ two times.

If we are given a walk $E_{i_1}E_{i_2}\dots E_{i_n} E_{i_1}$, then we know that there is a closed subinterval $J_n$ in $E_{i_n}$ such that $f(J_n)=E_{i_1}$. We can then find a subinterval $J_{n-1}$ of $E_{i_{n-1}}$ such that $f(J_{n-1})=J_n$ and so $f^2(J_{n-1})=E_{i_1}$. We proceed inductively until we obtain a subinterval $J_1$ of $E_{i_1}$ with the property that $f^n(J_1)=E_{i_1}$. Since $J_1 \subseteq E_{i_1}$ and $f^n(J_1)=E_{i_1}$, it follows from the Intermediate Value Theorem that $f^n$ must have a fixed point in $E_{i_1}$. There could be more than one, of course, but there must be at least one. We will call it $a$. From the construction, we also know for $1 \leq k \leq n$ that $f^k(a) \in E_k$. We would like to say that if the walk is non-repetitive then the periodic point must have least period $n$, but this is not quite true. The problem arises from the fact that $2$ belongs to both $E_1$ and $E_2$ and $3$ belongs to both $E_2$ and $E_3$. So we do need to check that our non-repetitive walk is not describing the iterates of $2$ or $3$. However, with this minor proviso, the key idea is that non-repetitive closed walks in the Markov Graph correspond to periodic points with least period equal to the length of the walk. This idea is very powerful.

In the example, notice that we can form non-repetitive walks from $E_3$ to itself of all possible length without using $E_1$. One way of doing this is $E_3E_3$, $E_3E_2E_3$, $E_3E_3E_2E_3$, $E_3E_3E_3E_2E_3$ and so on. Each of these corresponds to a periodic point. Since $E_1$ is not being used, none of the periodic orbits can correspond to iterates of $2$ which gets mapped to $1$ after three iterations. This means that for the example above, the map must have a periodic point with least period $n$ for any positive integer $n$.

 \section{Markov matrices}\index{Markov matrices} Given a directed graph, it is natural to associate a matrix to it. The {\em Markov Matrix} associated to a Markov Graph is the square matrix with entry $e_{i,j}$ equal to the number of directed edges from $E_j$ to $E_i$. 
 
 Some readers may be wondering why the matrix is defined this way and why it is not the transpose. Later we will give a brief explanation of why it makes sense to define it this way. Essentially it is due to the fact that function composition goes from right to left.
 
Powers of the matrix give information about the number of walks from one vertex to another. The following theorem tells how.

\begin{theorem}\label{powersM} Let $M$ be the Markov matrix associated to a directed graph that has vertices labeled $E_1, \dots, E_n$, then the $ij$th entry of $M^k$ gives the number of walks of length $k$ from $E_j$ to $E_i$.
\end{theorem}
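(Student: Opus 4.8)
The plan is to argue by induction on the walk length $k$. The base case $k=1$ is immediate: by definition the $(i,j)$ entry of $M = M^1$ is $e_{i,j}$, the number of directed edges from $E_j$ to $E_i$, and a walk of length $1$ from $E_j$ to $E_i$ is precisely such an edge. So nothing needs to be done here beyond unwinding the definition.

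For the inductive step, I would assume the statement holds for $M^k$ and then examine the entries of $M^{k+1} = M \cdot M^k$. Writing out the matrix product, the $(i,j)$ entry is $\sum_{\ell=1}^n M_{i\ell}\,(M^k)_{\ell j}$. The strategy is to interpret each summand combinatorially: every walk of length $k+1$ from $E_j$ to $E_i$ is obtained in exactly one way by first taking a walk of length $k$ from $E_j$ to some intermediate vertex $E_\ell$ and then following a single edge from $E_\ell$ to $E_i$. By the inductive hypothesis there are $(M^k)_{\ell j}$ choices for the first portion, and by definition of $M$ there are $M_{i\ell} = e_{i,\ell}$ choices for the final edge; multiplying and summing over the intermediate vertex $E_\ell$ counts exactly the walks of length $k+1$ from $E_j$ to $E_i$, which is precisely the displayed entry of $M^{k+1}$.

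The step that requires the most care -- really the only subtlety -- is keeping the index convention straight, since the Markov matrix is defined so that $e_{i,j}$ counts edges from $E_j$ to $E_i$, the transpose of the more familiar adjacency convention. One must verify that this ``backwards'' indexing is exactly what makes the product $M \cdot M^k$, rather than $M^k \cdot M$ or a transpose, count the concatenation of a length-$k$ walk with a trailing edge pointing in the correct direction. Once the bookkeeping is confirmed on the base case and seen to propagate through the inductive step, the result follows. This is also where the remark in the text pays off: because function composition reads right to left, prepending the new edge corresponds to left-multiplication by $M$, which is the reason the transposed convention is the natural one to adopt.
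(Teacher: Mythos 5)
Your proof is correct: the induction on $k$ with the decomposition of a length-$(k+1)$ walk into a length-$k$ walk followed by a final edge is the standard argument, and your index bookkeeping ($M_{i\ell}$ counting edges from $E_\ell$ to $E_i$, so that $\sum_{\ell} M_{i\ell}(M^k)_{\ell j}$ counts walks from $E_j$ to $E_i$) is consistent with the paper's transposed convention. The paper states this theorem without proof, treating it as a standard fact, so there is nothing to compare against; the only minor quibble is that since powers of a single matrix commute, the distinction you draw between $M\cdot M^k$ and $M^k\cdot M$ is immaterial here (each simply corresponds to splitting off the last edge or the first edge of the walk).
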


Thus in the previous example its Markov Matrix is \[M=\left(\begin{array}{ccc}0 & 0 & 1 \\1 & 0 & 1 \\0 & 1 & 1\end{array}\right).\]
For example, if we are interested in walks of length $4$ we look at $M^4$ which in this case is \[\left(\begin{array}{ccc}1 & 2 & 4 \\2 & 3 & 6 \\2 & 4 & 7\end{array}\right).\] This tells us that from vertex $E_1$ in the markov graphs there are a total of $5$ walks of length $4$, one of which ends back at $E_1$, two end at $E_2$ and two at $E_3$.

In this example, we can see that there are a total of $31$ walks of length $4$.

\section{Trace and periodic orbits}

We are interested in periodic orbits and, as noted, above these correspond to the closed walks. The number of closed walks of length $k$ can be determined from the entries along the main diagonal of the powers of $M$. Thus the trace of the matrix gives important information. We illustrate by continuing with the example.

Now, \[M=\left(\begin{array}{ccc}0 & 0 & 1 \\1 & 0 & 1 \\0 & 1 & 1\end{array}\right), M^2=\left(\begin{array}{ccc}0 & 1 & 1 \\0 & 1 & 2 \\1 & 1 & 2\end{array}\right), M^4=\left(\begin{array}{ccc}1 & 2 & 4 \\2 & 3 & 6 \\2 & 4 & 7\end{array}\right).\]

The trace of $M$ is $1$, telling us that there is one closed walk of length one. i.e., the one from $E_3$ to itself. The trace of $M^2$ is $3$. The closed walks of length two are the walk from $E_3$ to $E_2$ which is counted twice - once for the closed walk considered  as starting at $E_2$ and once for the walk starting and ending at $E_3$ - and the repetition of the closed walk of length one. Thus the number of non-repetitive closed walks of length two is $\frac{1}{2}(3-1)=1$, if we don't count the same walk with different starting points as separate walks.

Similarly, the trace of $M^4$ is $11$. The number of non-repetitive closed walks of length $4$ is $\frac{1}{4}(11-3)=2$, and so $f$ must have at least two periodic orbits with least period $4$.

\section{Oriented Markov Graphs and Matrices}\index{Oriented Markov Graphs}\index{Oriented Markov Matrices} We now choose an orientation for $E_1$, $E_2$ and $E_3$. The standard orientation is from left to right, but any other orientation could have been chosen. (This point becomes relevant later when we consider edges on graphs and there is no longer a natural way of choosing orientations.) Once an orientation has been chosen we define the {\em Oriented Markov Graph} to be the Markov Graph, but with the addition of plus or minus signs on the directed edges depending on whether $f$ maps the edge in an orientation preserving or reversing way. In our example if we choose the standard orientation we obtain the graph in Figure $3$.

Given a walk in the graph, we assign it a positive orientation if it has an even number of negative edges and assign it a negative orientation otherwise.

If we are given a walk $E_{i_1}E_{i_2}\dots E_{i_n} E_{i_1}$, then we know that there is a closed subinterval $J_n$ in $E_{i_n}$ such that $f(J_n)=E_{i_1}$. If the oriented Markov Graph has a directed edge with a positive sign from $E_{i_n}$ to $E_{i_1}$, we can chose $J$ such that the initial point of $J$ gets mapped to the initial point of $E_{i_1}$ and the terminal point of $J$ gets mapped to the terminal point of $E_{i_1}$.
We can then find a subinterval $J_{n-1}$ of $E_{i_{n-1}}$ such that $f(J_{n-1})=J_n$, mapping the endpoints according to the orientation given by the directed edge from  $E_{i_{n-1}}$ to $E_{i_{n}}$, and so $f^2(J_{n-1})=E_1$. We proceed inductively until we obtain a subinterval $J_1$ of $E_{i_1}$ with the property that $f^n(J_1)=E_{i_1}$. Since $J_1 \subseteq E_{i_1}$ and $f^n(J_1)=E_{i_1}$. As before we obtain a point in $J_1$ that is fixed by $f^n$. However, notice that if the closed walk has negative orientation then the fixed point(s) must be in the interior of $J_1$, and so cannot be an endpoint of $E_{i_1}$.

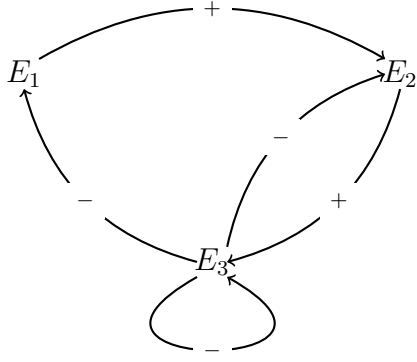
\begin{figure}
  \begin{centering}

\begin{tikzpicture}[scale=5]

  \draw[]{(0, 0)} node {$E_1$};
  \draw[]{(1,0)} node {$E_2$};
   \draw[]{(.5,-.5)} node {$E_3$};

  \draw[ thick, ->] (.04,.04) to [bend left=30] node[font=\tiny, fill=white] {$+$}(.96,.04);
  \draw[ thick, ->] (1,-.04) to [bend left=30] node[font=\tiny, fill=white] {$+$}(.54,-.5);
  \draw[ thick, ->] (.46, -.5) to [bend left=30] node[font=\tiny, fill=white] {$-$} (0,-.04);
   \draw[ thick, ->] (.54, -.46) to [bend left=30] node[font=\tiny, fill=white] {$-$}(.96,0);
    \draw[ thick, ->] (.46,-.54)..controls (0,-.8) and (1,-.8).. node[font=\tiny, fill=white] {$-$} (.54,-.54);

 \end{tikzpicture}
\caption{Oriented Markov Graph}
 \end{centering}
 \end{figure}

The {\em Oriented Markov Matrix} is the matrix with entry $e_{i,j}$ equal to the number of positive directed edges from $E_j$ to $E_i$ minus the number of negative directed edges from $E_j$ to $E_i$. In our example we obtain the Oriented Markov Matrix \[OM=\left(\begin{array}{ccc}0 & 0 & -1 \\1 & 0 & -1 \\0 & 1 & -1\end{array}\right).\]

Powers of this matrix give the sum of the positive walks minus the sum of the negative walks. 

\begin{theorem} \label{ompower} Let $OM$ be the oriented Markov matrix associated to a directed graph that has vertices labeled $E_1, \dots, E_n$, then the $ij$th entry of $OM^k$ gives the number of positive walks from $E_j$ to $E_i$ minus the number of negative walks from $E_j$ to $E_i$.
\end{theorem}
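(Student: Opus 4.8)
The plan is to prove this by induction on the walk-length $k$, paralleling the proof of Theorem \ref{powersM} but carrying the signs along. The essential new ingredient is that the orientation of a walk is \emph{multiplicative}: if each edge is assigned the value $+1$ or $-1$ according to whether $f$ preserves or reverses orientation on it, then the sign of a walk (positive when the number of negative edges is even, negative otherwise) is exactly the product of the signs of its edges. In particular, when a walk of length $k$ is split into an initial segment of length $k-1$ followed by its final edge, the sign of the whole walk is the product of the sign of that initial segment and the sign of the final edge. I will write $\sigma(w)\in\{+1,-1\}$ for the sign of a walk $w$.

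First I would dispose of the base case $k=1$. By definition the $ij$th entry of $OM$ is the number of positive edges from $E_j$ to $E_i$ minus the number of negative edges from $E_j$ to $E_i$, which is precisely the signed count of length-one walks from $E_j$ to $E_i$, so the claim holds for $k=1$.

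For the inductive step, assume the $\ell j$ entry of $OM^{k-1}$ equals the number of positive minus the number of negative walks of length $k-1$ from $E_j$ to $E_\ell$. Writing $OM^{k}=OM\cdot OM^{k-1}$ and using the indexing convention that entry $e_{i,j}$ counts edges from $E_j$ to $E_i$, the $ij$th entry is $\sum_\ell (OM)_{i\ell}(OM^{k-1})_{\ell j}$. Here $(OM)_{i\ell}$ is the signed count of edges from $E_\ell$ to $E_i$ and $(OM^{k-1})_{\ell j}$ is, by hypothesis, the signed count of length-$(k-1)$ walks from $E_j$ to $E_\ell$. Expanding each product, the contribution of a particular length-$(k-1)$ walk $w$ from $E_j$ to $E_\ell$ paired with a particular edge $e$ from $E_\ell$ to $E_i$ is $\sigma(w)\sigma(e)$, which by multiplicativity equals $\sigma(we)$, the sign of the concatenated walk $we$ of length $k$ from $E_j$ to $E_i$. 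As $\ell$, $w$, and $e$ range over all admissible choices, the pairs $(w,e)$ range over exactly the walks of length $k$ from $E_j$ to $E_i$ — each such walk being uniquely determined by its length-$(k-1)$ prefix and its final edge, with $E_\ell$ the vertex reached after $k-1$ steps. Hence the sum is the number of positive minus the number of negative walks of length $k$ from $E_j$ to $E_i$, completing the induction.

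The one point that is more than bookkeeping, and so the step I expect to be the main obstacle, is justifying multiplicativity of the sign under concatenation, that is, that the parity of the total number of negative edges equals the sum of the parities of the two pieces. This is immediate once signs are recorded as $\pm 1$ and the walk-sign is identified with the product of the edge-signs, but it is the conceptual hinge on which the whole argument turns; everything else is the same combinatorial counting already carried out for Theorem \ref{powersM}.
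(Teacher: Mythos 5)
Your proof is correct: the induction on $k$, the identification of matrix multiplication with concatenation of a length-$(k-1)$ prefix and a final edge, and the multiplicativity of the walk sign (parity of negative edges adding under concatenation) together give a complete argument, and you have respected the paper's convention that the $(i,j)$ entry counts edges from $E_j$ to $E_i$. The paper states Theorem \ref{ompower} without proof, treating it as a routine signed refinement of Theorem \ref{powersM}, and your argument is exactly the standard one it implicitly relies on.
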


The Oriented Markov Matrix squared is 
\[OM^2=\left(\begin{array}{ccc}0 & -1 & 1 \\0 & -1 & 0 \\1 & -1 & 0\end{array}\right).\] 

We will use this example to show that we can get some useful information from these matrices.

The bottom right entry of $OM$ tells us that there is a negative walk from $E_3$ to itself. This means that there must be a positive walk of length $2$ from $E_3$ to itself (just repeat the walk of length one). The bottom right entry of $OM^2$ tells us that the number of positive walks of length two from $E_3$ to itself equals the number of negative walks. We can thus deduce that there must be a negative walk of length two from $E_3$ to itself. As in the previous argument, we can use these two walks to deduce the existence of periodic orbits of all positive least periods.

It might seem that it is simpler to use the Markov Matrix rather than the Oriented Markov Matrix, but it will be shown that the Oriented Markov Matrix has very nice algebraic properties.

\section{Piecewise linear maps}

Given an example of a periodic orbit such as is depicted in Figure $1$

\noindent there is a natural way to draw a piecewise linear map that has this periodic orbit. This map is sometimes called the {\em connect-the-dots} map.\index{connect-the-dot map}  It can be considered as the `simplest' map that has a periodic orbit of the given permutation. The graph of the piecewise linear map to the example above is drawn in Figure 4.

 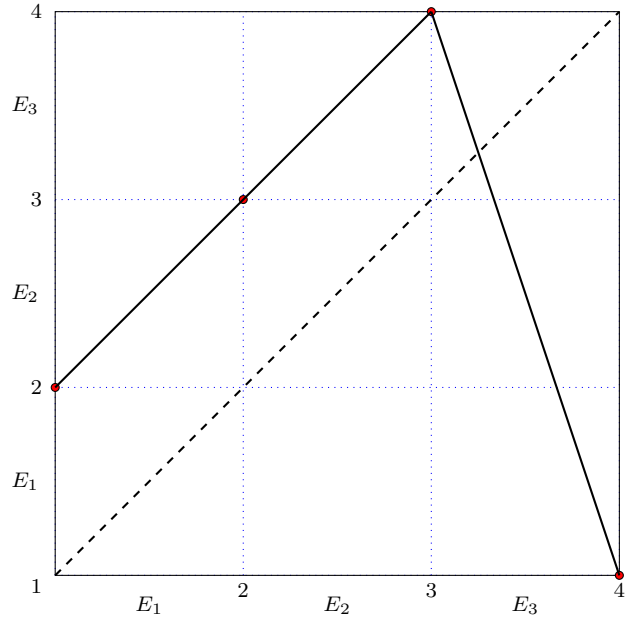
\begin{figure}
   \begin{centering}

\begin{tikzpicture}[scale=5]

\draw[step=.5, dotted, blue] (0, 0) grid (1.5,1.5);
\draw (0,0) rectangle (1.5,1.5);
  \draw[font=\tiny]{(1, -.04)} node {3};
    \draw[font=\tiny]{(1.5, -.04)} node {4};
 
  \draw[font=\tiny]{(0.5 ,-.04)} node {2};
  
  \draw[font=\tiny]{(-.05, 1)} node {3};
  \draw[font=\tiny]{(-.05, -.03)} node {1};
  \draw[font=\tiny]{(-.05, 0.5)} node {2};
   \draw[font=\tiny]{(-.05, 1.5)} node {4};
  
  \draw[fill=red] (0, .5) circle (.3pt);
    \draw[fill=red] (1, 1.5) circle (.3pt);
        \draw[fill=red] (.5, 1) circle (.3pt);
         \draw[fill=red] (1.5, 0) circle (.3pt);
   
   \draw[thick] (0, .5) to (1, 1.5);
      \draw[thick] (1, 1.5) to (1.5, 0);
      
         \draw[thick, dashed] (0, 0) to (1.5, 1.5);
         
            \draw {(.25,-.08)} node[font=\tiny]{$E_1$};
      \draw {(.75,-.08)} node[font=\tiny]{$E_2$};
         \draw {(1.25,-.08)} node[font=\tiny]{$E_3$};

            \draw {(-.08,.25)} node[font=\tiny]{$E_1$};
      \draw {(-.08,.75)} node[font=\tiny]{$E_2$};
         \draw {(-.08,1.25)} node[font=\tiny]{$E_3$};

 \end{tikzpicture}
\caption{ Graph of $L$}
 \end{centering}
 \end{figure}
 
 The graph of the composition of the map with itself is shown in Figure 5.

 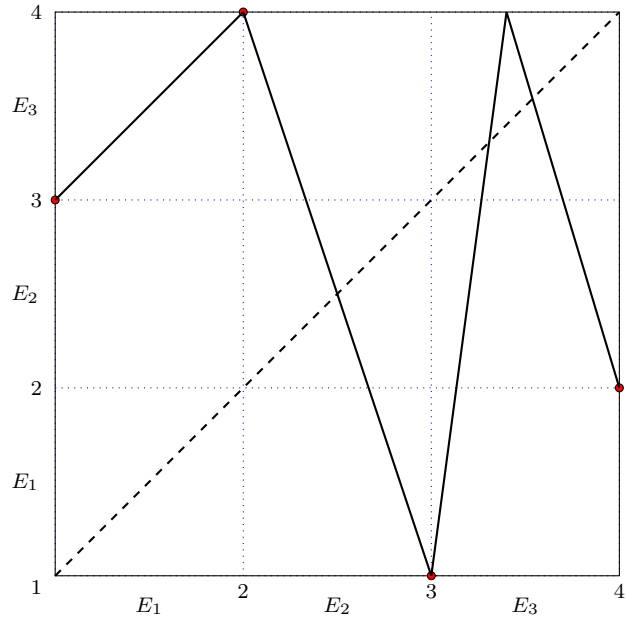
\begin{figure}
  \begin{centering}

\begin{tikzpicture}[scale=5]

\draw[step=.5, dotted, blue] (0, 0) grid (1.5,1.5);
\draw (0,0) rectangle (1.5,1.5);
  \draw[font=\tiny]{(1, -.04)} node {3};
    \draw[font=\tiny]{(1.5, -.04)} node {4};
 
  \draw[font=\tiny]{(0.5 ,-.04)} node {2};
  
  \draw[font=\tiny]{(-.05, 1)} node {3};
  \draw[font=\tiny]{(-.05, -.03)} node {1};
  \draw[font=\tiny]{(-.05, 0.5)} node {2};
   \draw[font=\tiny]{(-.05, 1.5)} node {4};
  
  \draw[fill=red] (0, 1) circle (.3pt);
    \draw[fill=red] (1, 0) circle (.3pt);
        \draw[fill=red] (.5, 1.5) circle (.3pt);
         \draw[fill=red] (1.5, .5) circle (.3pt);
   
   \draw[thick] (0, 1) to (.5, 1.5);
      \draw[thick] (.5, 1.5) to (1, 0);
       \draw[thick] (1, 0) to (1.2, 1.5);
         \draw[thick]  (1.2, 1.5) to (1.5, .5);
      
         \draw[thick, dashed] (0, 0) to (1.5, 1.5);

            \draw {(.25,-.08)} node[font=\tiny]{$E_1$};
      \draw {(.75,-.08)} node[font=\tiny]{$E_2$};
         \draw {(1.25,-.08)} node[font=\tiny]{$E_3$};

            \draw {(-.08,.25)} node[font=\tiny]{$E_1$};
      \draw {(-.08,.75)} node[font=\tiny]{$E_2$};
         \draw {(-.08,1.25)} node[font=\tiny]{$E_3$};

 \end{tikzpicture}
 \caption{Graph of $L^2$}
 \end{centering}
 \end{figure}
 
 For the moment,  we will denote the connect-the-dot map by $L$ and the composition of this map with itself by $L^2$. The map $L$ gives us information about walks of length $1$ in the Markov graph, and entries in both the Markov matrix and oriented Markov matrix. The graph of $L^2$ gives us information about walks of length $2$ in the Markov graph, and entries in both $M^2$ and $OM^2$.
 
 Looking at the graph of $L$ above $E_3$ it is clear that $E_3$ gets mapped onto $E_1E_2E_3$ with negative orientation. Thus each entry in the third column in the Markov matrix is $1$ and each entry in the Oriented Markov matrix is $-1$. 
 
 Looking at the graph of $L^2$ above $E_3$ we see that $E_3$ gets mapped to the chain $E_1E_2E_3{-E_3}{-E_2}$. If we ignore the negative signs we have one $E_1$ and two each of $E_2$ and $E_3$ which gives the third column of the square of Markov matrix as $[1, 2, 2]^T$. If we take the negative signs into account we obtain one $E_1$ and both the $E_2$ and $E_3$ terms cancel giving the third column of the square of the oriented Markov matrix as $[1, 0, 0]^T$.

 \subsection{Algebra} This section began with an example in which $1$ got mapped to $2$, $2$ to $3$, $3$ to $4$ and $4$ got mapped back to $1$. We can think of this as a permutation and describe this using cycle notation as $(1, 2, 3, 4)$. We will let $\theta=(1, 2, 3, 4)$ and will now denote the piecewise linear map associated to this by $L_\theta$ (this is the map we were calling $L$). Then $L_\theta^2$ has permutation $(1,2,3,4)^2=(1,3)(2,4)$. The following lemma is clear.
 
 \begin{lemma}
 Suppose $\theta$ is a permutation of $\{1, 2, \dots, n\}$ and that $L_\theta$ is the corresponding piecewise linear map, then $L_\theta^k$ permutes $\{1, 2, \dots, n\}$ by $\theta^k$ for any positive integer $k$.
 
 \end{lemma}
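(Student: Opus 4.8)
The plan is to prove this by induction on $k$, using only the defining property of the connect-the-dots map. The essential observation is that $L_\theta$ was constructed precisely so that its graph passes through the points $(i, \theta(i))$ for each $i \in \{1, \dots, n\}$; in other words, $L_\theta(i) = \theta(i)$ for every point $i$ of the orbit. The linear interpolation between consecutive orbit points plays no role in tracking where the orbit points themselves go, so the claim is really a statement about iterating the restriction of $L_\theta$ to the finite set $\{1, \dots, n\}$, where it agrees with $\theta$ by construction.

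First I would dispose of the base case $k = 1$, which is immediate: by the definition of $L_\theta$ we have $L_\theta(i) = \theta(i) = \theta^1(i)$ for all $i \in \{1, \dots, n\}$.

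For the inductive step, I would assume that $L_\theta^k$ agrees with $\theta^k$ on $\{1, \dots, n\}$ and then compute $L_\theta^{k+1}(i) = L_\theta\bigl(L_\theta^k(i)\bigr) = L_\theta\bigl(\theta^k(i)\bigr)$. The one point that genuinely needs attention is that $\theta^k(i)$ is again a member of $\{1, \dots, n\}$, since $\theta$, and hence $\theta^k$, is a permutation of that set. Because the argument fed into $L_\theta$ is once more one of the distinguished orbit points, the defining property applies a second time to give $L_\theta\bigl(\theta^k(i)\bigr) = \theta\bigl(\theta^k(i)\bigr) = \theta^{k+1}(i)$, which closes the induction.

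The only conceivable obstacle is the temptation to worry about how $L_\theta$ behaves at non-integer arguments or about the details of the interpolation. The proof sidesteps this entirely: every iterate keeps us on the finite set $\{1, \dots, n\}$, and on that set $L_\theta$ and $\theta$ coincide by design. This is exactly why the lemma can fairly be called clear, and why the bookkeeping for the example ($\theta = (1,2,3,4)$ giving $\theta^2 = (1,3)(2,4)$) carries over to $L_\theta^2$ without further argument.
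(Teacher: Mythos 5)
Your induction is correct and is exactly the routine argument the paper has in mind when it simply declares the lemma ``clear'' without giving a proof: $L_\theta$ agrees with $\theta$ on the orbit points by construction, and the iterates never leave that finite set. Nothing is missing.
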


Given a $k$ we can construct $L_{\theta^k}$. Notice that both $L_{\theta^k}$ and $L_\theta^k$ have the same corresponding permutation, $\theta^k$. We illustrate with our example of $\theta=(1, 2, 3, 4)$ with $k=2$. The graph of $L_\theta^2$ is given in Figure 5 and of $L_{\theta^2}$ in Figure 6.

\begin{figure}
  \begin{centering}

\begin{tikzpicture}[scale=5]

\draw[step=.5, dotted, blue] (0, 0) grid (1.5,1.5);
\draw (0,0) rectangle (1.5,1.5);
  \draw[font=\tiny]{(1, -.04)} node {3};
    \draw[font=\tiny]{(1.5, -.04)} node {4};
 
  \draw[font=\tiny]{(0.5 ,-.04)} node {2};
  
  \draw[font=\tiny]{(-.05, 1)} node {3};
  \draw[font=\tiny]{(-.05, -.03)} node {1};
  \draw[font=\tiny]{(-.05, 0.5)} node {2};
   \draw[font=\tiny]{(-.05, 1.5)} node {4};
  
  \draw[fill=red] (0, 1) circle (.3pt);
    \draw[fill=red] (1, 0) circle (.3pt);
        \draw[fill=red] (.5, 1.5) circle (.3pt);
         \draw[fill=red] (1.5, .5) circle (.3pt);
   
   \draw[thick] (0, 1) to (.5, 1.5);
      \draw[thick] (.5, 1.5) to (1, 0);
       \draw[thick] (1, 0) to (1.5, .5);

         \draw[thick, dashed] (0, 0) to (1.5, 1.5);

            \draw {(.25,-.08)} node[font=\tiny]{$E_1$};
      \draw {(.75,-.08)} node[font=\tiny]{$E_2$};
         \draw {(1.25,-.08)} node[font=\tiny]{$E_3$};

            \draw {(-.08,.25)} node[font=\tiny]{$E_1$};
      \draw {(-.08,.75)} node[font=\tiny]{$E_2$};
         \draw {(-.08,1.25)} node[font=\tiny]{$E_3$};

 \end{tikzpicture}
 \caption{Graph of $L_{\theta^2}$}
 \end{centering}
 \end{figure}
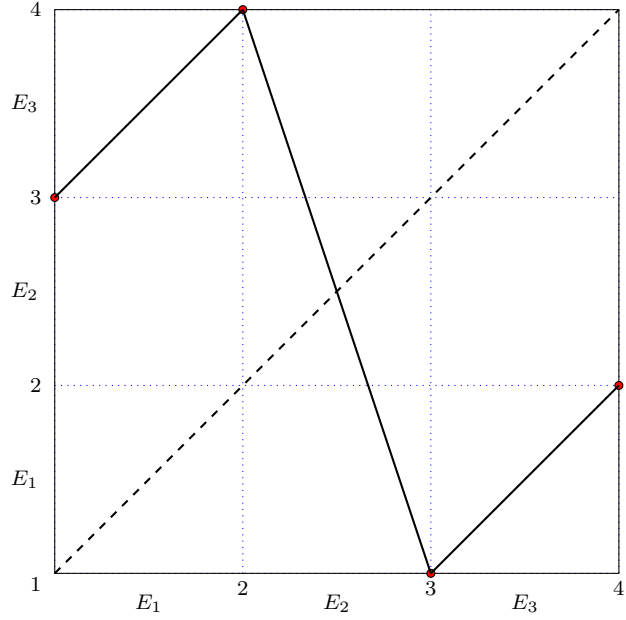

Notice that in general the Markov Matrices of $L_\theta^k$ and $L_{\theta^k}$ will not be equal. In our example, these are $\left(\begin{array}{ccc}0 & 1 & 1 \\0 & 1 & 2 \\1 & 1 & 2\end{array}\right)$ and $\left(\begin{array}{ccc}0 & 1 & 1 \\0 & 1 & 0 \\1 & 1 & 0\end{array}\right)$. However, notice that both $L_\theta^k$ and $L_{\theta^k}$ have the same Oriented Markov Matrix $\left(\begin{array}{ccc}0 & -1 & 1 \\0 & -1 & 0 \\1 & -1 & 0\end{array}\right)$. 

This insight gives the following:

\begin{theorem} \label{theta}
Suppose $\theta$ is a permutation of $\{1, 2, \dots, n\}$, then the Oriented Markov Matrices of  $L_\theta^k$ and $L_{\theta^k}$ are equal for any positive integer $k$.
\end{theorem}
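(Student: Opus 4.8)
The plan is to show that both $OM(L_\theta^k)$ and $OM(L_{\theta^k})$ coincide with a single matrix determined directly by the permutation $\theta^k$, so that their equality follows without computing either one explicitly. The starting observation is that, by the definition of the Oriented Markov Matrix, the $ij$-th entry of the oriented Markov matrix of a map $f$ is the signed count of the maximal monotone laps of $f|_{E_j}$ that cover $E_i$ (i.e.\ whose image contains $E_i$), each lap counted $+1$ or $-1$ according to whether it preserves or reverses orientation. The entire argument then rests on a single claim: for a piecewise linear $f$ whose values at the vertices $1,\dots,n$ and at all of its turning points are integers, this signed count depends only on the two numbers $f(j)$ and $f(j+1)$.

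First I would verify the integrality hypothesis for both maps. The map $L_\theta$ sends integers to integers and has turning points only at the vertices, so its turning values are integers; since $L_\theta$ carries each such integer value to another integer, an easy induction shows that every turning value of $L_\theta^k$ is an integer. The map $L_{\theta^k}$ is a connect-the-dots map, so its turning points are vertices with integer values. By the preceding Lemma both maps agree at every vertex $i$, taking the value $\theta^k(i)$.

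The heart of the proof is the claim, which I would establish by a signed-crossing (degree) computation. Fix $E_i$ and consider the level $y=i+\tfrac12$; because $y$ is a half-integer and all turning values are integers, the path traced by $f$ across $E_j$ never equals $y$ at a turning point, and a monotone lap covers $E_i$ if and only if its image contains $y$. Writing the consecutive turning values of $f|_{E_j}$ as $a=v_0,v_1,\dots,v_m=b$ with $a=f(j)$ and $b=f(j+1)$, a lap from $v_{l-1}$ to $v_l$ covers $E_i$ exactly when it crosses $y$, with orientation sign matching the up/down sense of that crossing. Each lap therefore contributes $\tfrac12\bigl(\mathrm{sign}(v_l-y)-\mathrm{sign}(v_{l-1}-y)\bigr)$ to the signed count, and summing over $l$ telescopes to $\tfrac12\bigl(\mathrm{sign}(b-y)-\mathrm{sign}(a-y)\bigr)$. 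This last quantity equals $\mathrm{sign}(b-a)$ when $E_i$ lies between $f(j)$ and $f(j+1)$ and is $0$ otherwise; in particular it depends only on $f(j)$ and $f(j+1)$, which proves the claim.

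With the claim in hand the theorem is immediate: since $L_\theta^k$ and $L_{\theta^k}$ take the same values $\theta^k(j)$ and $\theta^k(j+1)$ at the endpoints of each edge $E_j$, every entry of their oriented Markov matrices agrees. I expect the main obstacle to be the bookkeeping in the crossing computation — specifically, making precise that "the lap covers $E_i$'' is equivalent to "the lap's image contains the half-integer level $y$,'' which is exactly where integrality of the turning values is used, and confirming that the $+/-$ orientation convention lines up with the upward/downward sense of each crossing so that the telescoping sum is valid.
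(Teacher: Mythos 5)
Your argument is correct, and it is worth noting that the paper itself supplies no proof of this theorem at all: the statement is asserted on the strength of the worked example with $\theta=(1,2,3,4)$, $k=2$, with the actual proof deferred to the references. So your proposal fills a genuine gap rather than paralleling an existing argument. The strategy is sound: both maps realize the permutation $\theta^k$ on the vertices (by the preceding Lemma), both are piecewise linear with all turning values in $\{1,\dots,n\}$ (for $L_\theta^k$ because any turning point is sent by some intermediate iterate onto a vertex, after which all further images are vertices), and your telescoping identity $\sum_l \tfrac12\bigl(\operatorname{sign}(v_l-y)-\operatorname{sign}(v_{l-1}-y)\bigr)=\tfrac12\bigl(\operatorname{sign}(b-y)-\operatorname{sign}(a-y)\bigr)$ with $y=i+\tfrac12$ correctly reduces the signed lap count over $E_j$ to a function of the endpoint values alone. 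The two places you flag as delicate do check out: since all $v_l$ are integers, a monotone lap's image contains $[i,i+1]$ if and only if it contains the half-integer $y$, and an up-crossing (resp.\ down-crossing) contributes $+1$ (resp.\ $-1$), matching the orientation convention; the absence of constant laps (slopes are nonzero because $\theta$ is a permutation) means no $\operatorname{sign}$ term vanishes. What your approach buys is exactly the interpretation the paper only hints at later, that the Oriented Markov Matrix is the matrix of the induced map on one-chains: your half-integer level-crossing count is the local degree, which is a homotopy-type invariant depending only on where the endpoints of $E_j$ go, and this is also why the stronger statement $M(\alpha)M(\beta)=M(\alpha\beta)$ mentioned in the paper's aside holds. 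One small presentational caveat: you should state explicitly that each monotone lap covering $E_i$ corresponds to exactly one directed edge of the Oriented Markov Graph (the paper's definition via subintervals $J$ with $f(J)=E_i$ is informal on multiplicity, but the paper's own column computations confirm this lap-counting convention).
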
 

Aside: If we let $M(\theta)$ denote the Orinted Markov Graph of $L_\theta$, then the above result can be expressed as [$M(\theta)]^k=M(\theta^k)$. However, though we won't need it here, a stronger result is true. If $\alpha$ and $\beta$ are permutations of $\{1, 2, \dots, n\}$, then $M(\alpha)M(\beta)= M(\alpha \beta)$. This is the reason we defined the $(i,j)$ entry of the Oriented Markov matrix to be calculated using edges from $E_j$ to $E_i$ and not from $E_i$ to $E_j$.

\section{The trace of the oriented Markov matrix}

Let $\theta$ be a permutation of $\{1, 2, \dots, n\}$. Suppose that none of the integers $1, 2, \dots, n$ are fixed by $\theta$. Then the graph of $L_\theta$ must intersect the line $y=x$ an odd number of times. If the graph of $L_\theta$ crosses the line $y=x$ with positive orientation, the next time it crosses it will have negative orientation, and vice-versa. The first crossing must have negative orientation, and there must be an odd number of crossings. These observations give the following result.

\begin{theorem} \label{trace}\index{Trace theorem for unit interval}
Let $\theta$ be a permutation of $\{1, 2, \dots, n\}$. Suppose that none of the integers in $\{1, 2, \dots, n\}$ are fixed by $\theta$. The Oriented Markov Matrix of 
$L_\theta$ has trace of $-1$.
\end{theorem}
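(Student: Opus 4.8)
The plan is to read the trace of the Oriented Markov Matrix as a signed count of the points where the graph of $L_\theta$ meets the diagonal $y=x$, and then to feed that count into the alternation observations stated just before the theorem. Recall that $\tr(OM)=\sum_{i=1}^{n-1}e_{i,i}$, where $e_{i,i}$ is the signed number of self-coverings of the edge $E_i=[i,i+1]$, i.e.\ positive directed edges from $E_i$ to itself minus negative ones. The structural fact I would lean on throughout is that the connect-the-dots map $L_\theta$ has its only breakpoints at the integers, so on each $E_i$ it is linear, and in particular strictly monotone.

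First I would pin down each diagonal entry. On $E_i$ the graph of $L_\theta$ is the segment from $(i,\theta(i))$ to $(i+1,\theta(i+1))$ with $\theta(i)\neq\theta(i+1)$ distinct integers, so $L_\theta(E_i)=[\,\min(\theta(i),\theta(i+1)),\,\max(\theta(i),\theta(i+1))\,]$, and monotonicity allows at most one subinterval $J\subseteq E_i$ with $L_\theta(J)=E_i$. Hence $e_{i,i}\in\{-1,0,+1\}$: it is $+1$ when $L_\theta$ increases across a self-covering, $-1$ when it decreases, and $0$ when $E_i\not\subseteq L_\theta(E_i)$.

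The heart of the argument is an exact, orientation-matching dictionary between self-coverings and diagonal crossings. Writing $g(x)=L_\theta(x)-x$, I would show that over $E_i$ the edge self-covers if and only if $g$ changes sign on $[i,i+1]$, and that the sign of $e_{i,i}$ is precisely the orientation of that crossing. Two cases are checked. If $L_\theta$ is increasing on $E_i$ its slope is at least $1$ (distinct integers), so $g$ is nondecreasing and any sign change runs from $-$ to $+$; this is exactly the self-covering inequality $\theta(i)<i<i+1<\theta(i+1)$ failing to fail, and it gives a positive-orientation crossing with $e_{i,i}=+1$. If $L_\theta$ is decreasing the slope is at most $-1$, so $g$ is strictly decreasing and any sign change runs from $+$ to $-$, again coinciding with the self-covering condition and giving $e_{i,i}=-1$. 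Because no integer is fixed, $g$ is nonzero at every integer, so each crossing lies in the interior of a unique $E_i$ and is a transversal sign change—this is what makes the dictionary exact, with nothing double-counted or missed. I expect this equivalence—arguing that ``$L_\theta(E_i)$ contains $E_i$'' is literally the same event as ``$g$ changes sign on $E_i$,'' with matching orientation—to be the main obstacle, since it is where the integrality of the slopes and the no-fixed-integer hypothesis are genuinely used.

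Finally I would assemble the count. Summing the dictionary over $i$ yields $\tr(OM)=(\#\text{positive crossings})-(\#\text{negative crossings})$. Since $g(1)=\theta(1)-1>0$ and $g(n)=\theta(n)-n<0$, the crossings are odd in number, alternate in orientation, and begin negative—exactly the observations recorded before the statement. Thus among $2k+1$ crossings there are $k+1$ negative and $k$ positive, giving $\tr(OM)=k-(k+1)=-1$. As a check, for $\theta=(1,2,3,4)$ only $E_3$ self-covers, along a decreasing segment, so $e_{3,3}=-1$ while $e_{1,1}=e_{2,2}=0$, and the trace is $-1$, matching the displayed matrix $OM$.
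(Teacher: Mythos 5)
Your proposal is correct and follows essentially the same route as the paper, which argues via the intersections of the graph of $L_\theta$ with the line $y=x$: an odd number of crossings, alternating in orientation, beginning and ending negative. The only difference is that you make explicit the dictionary (via $g(x)=L_\theta(x)-x$ and the no-fixed-integer hypothesis) between diagonal entries $e_{i,i}$ and transversal crossings, a step the paper treats as an ``observation'' and leaves unproved.
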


If $\theta^k$ fixes all of the integers in $\{1, 2, \dots, n\}$, i.e., $\theta^k$ is the identity permutation, then the graph of $L_{\theta^k}$ is just the line $y=x$ and so the Oriented Markov Matrix of 
$L_{\theta^k}$ will be the $(n-1)\times(n-1)$ identity matrix.  We state this as a theorem that will refer back to.

\begin{theorem} \label{identity}
Let $\theta$ be a permutation of $\{1, 2, \dots, n\}$. Suppose all of the integers from $1$ to $n$ are fixed by $\theta^k$, then the Oriented Markov Matrix of 
$L_{\theta^k}$ is the identity matrix.
\end{theorem}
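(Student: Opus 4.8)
The plan is to unwind the hypothesis and reduce the claim to a direct inspection of the connect-the-dots map of the identity permutation. The assumption that every integer in $\{1, 2, \dots, n\}$ is fixed by $\theta^k$ says precisely that $\theta^k$ is the identity permutation of $\{1, 2, \dots, n\}$. Since $L_{\theta^k}$ is the piecewise linear map determined by the requirements $L_{\theta^k}(i) = \theta^k(i) = i$ for each $i$, linear interpolation forces $L_{\theta^k}$ to be the identity map on the interval $[1,n]$; equivalently, its graph is the line $y = x$. This is the observation already made in the paragraph preceding the statement, and the remaining work is to translate it into a statement about the Oriented Markov Matrix.

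First I would record the effect of the identity map on each edge under the standard left-to-right orientation. Each edge $E_i = [i, i+1]$ satisfies $L_{\theta^k}(E_i) = E_i$, and because the identity map is increasing it preserves orientation, so the Oriented Markov Graph contains a positive self-loop at every vertex $E_i$. The one step that deserves more than a glance is checking that there are no further edges. A directed edge from $E_i$ to $E_j$ requires a closed subinterval $J \subseteq E_i$ with $L_{\theta^k}(J) = E_j$; for the identity map this forces $J = E_j$ and hence $E_j \subseteq E_i$. Since $E_i$ and $E_j$ are closed intervals of equal length one, this containment holds only when $i = j$, so the positive self-loops are the only edges present.

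Finally I would read off the matrix from the definition, in which the entry $e_{i,j}$ counts positive minus negative directed edges from $E_j$ to $E_i$. The preceding analysis yields $e_{i,i} = 1$, contributed by the positive self-loop at $E_i$, and $e_{i,j} = 0$ whenever $i \neq j$, so the Oriented Markov Matrix is the $(n-1) \times (n-1)$ identity. I expect the only mild obstacle to be the no-extra-edges verification; once $\theta^k$ is recognized as the identity permutation and $L_{\theta^k}$ as the identity map, every other step is immediate.
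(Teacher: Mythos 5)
Your proposal is correct and follows essentially the same route as the paper, which simply observes that $\theta^k$ being the identity permutation forces the graph of $L_{\theta^k}$ to be the line $y=x$, whence the Oriented Markov Matrix is the $(n-1)\times(n-1)$ identity. Your additional verification that no directed edges other than the positive self-loops can occur is a worthwhile detail the paper leaves implicit, but it does not change the argument.
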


\section{Proof of Sharkovsky's theorem -- part $1$}\index{Sharkovsky's Theorem -- proof}

Let $f: \mathbb{R} \to \mathbb{R}$ be continuous. Suppose that $f$ has a periodic point of least period $n$. By conjugating with a homeomorphism we can rescale and make the periodic orbit the integers $1$ through $n$. We will let $\theta$ denote the permutation of $\{1, 2, \dots, n\}$ given by $f$. As before we will let $L_\theta$ denote the corresponding connect-the-dots map. We have also noted that given a closed non-repetitive walk of length $m$ with negative orientation in the Oriented Markov Graph of $L_\theta$ there is a periodic point with least period $m$ for both $L_\theta$ and $f$. In this section we will prove the existence of periodic points of least period $m$ for the map $f$ by showing that the Oriented Markov Graph of $L_\theta$ has a non-repetitive closed walk of length $m$ with negative orientation.

\begin{lemma}  If $n$ is not a divisor of $2^k$ then $f$
has a periodic point of least period $2^k$.
\end{lemma}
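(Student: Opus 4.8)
The plan is to produce, inside the Oriented Markov Graph of $L_\theta$, a non-repetitive closed walk of length $2^k$ with negative orientation; by the correspondence recorded just before the lemma this yields a periodic point of least period $2^k$ for both $L_\theta$ and $f$. Write $A$ for the Oriented Markov Matrix of $L_\theta$, and recall from Theorem~\ref{ompower} that the $ii$th entry of $A^m$ equals the number of positive closed walks of length $m$ based at $E_i$ minus the number of negative such walks, so that $\tr(A^m)$ is the signed count, positives minus negatives, of all closed walks of length $m$ taken with a base point.

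First I would pin down this trace at the exponent $2^k$. Because $f$ has least period $n$, the permutation $\theta$ is a single $n$-cycle, and for an $n$-cycle $\theta^{j}$ fixes a point exactly when $n \mid j$. The hypothesis that $n$ is not a divisor of $2^k$ therefore says precisely that $\theta^{2^k}$ fixes none of $1, \dots, n$. Applying Theorem~\ref{trace} to the permutation $\theta^{2^k}$ shows that the Oriented Markov Matrix of $L_{\theta^{2^k}}$ has trace $-1$, and Theorem~\ref{theta} identifies that matrix with $A^{2^k}$. Hence $\tr(A^{2^k}) = -1$.

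The heart of the argument is a parity observation that converts this signed count into an honest non-repetitive negative walk. Let $P$ and $N$ be the numbers of positive and of negative closed walks of length $2^k$, each counted with a base point; then $P - N = \tr(A^{2^k}) = -1$, so $N = P + 1 \ge 1$ and at least one negative closed walk of length $2^k$ exists. I claim every such walk is automatically non-repetitive. A repetitive closed walk of length $2^k$ is the $r$-fold concatenation of a shorter closed walk of length $d$ with $rd = 2^k$ and $r \ge 2$; since $d$ divides $2^k$, both $d$ and $r = 2^k/d$ are powers of two, so $r$ is even. Repeating a walk $r$ times multiplies its number of negative edges by $r$, and an even multiple is even, so the resulting length-$2^k$ walk is positive. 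Contrapositively, no negative closed walk of length $2^k$ can be repetitive.

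Combining these facts, the negative closed walk of length $2^k$ guaranteed by $\tr(A^{2^k}) = -1$ is non-repetitive, and the correspondence between non-repetitive negative closed walks and periodic points then delivers a point of least period $2^k$ for $f$. The step I expect to be the crux is the parity observation above: it isolates exactly what is special about exponents that are powers of two, namely that every proper repetition factor $r = 2^k/d$ is even and so erases the sign, which is what prevents the cancellation measured by the trace from being caused entirely by repetitions of shorter orbits. The auxiliary facts — that $\theta$ is an $n$-cycle and that $\theta^{2^k}$ is fixed-point-free exactly under the stated divisibility hypothesis — are routine once the cycle structure of a power of an $n$-cycle is recalled.
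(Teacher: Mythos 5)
Your proposal is correct and follows essentially the same route as the paper: fixed-point-freeness of $\theta^{2^k}$ gives $\tr$ of the Oriented Markov Matrix of $L_\theta^{2^k}$ equal to $-1$ via Theorems~\ref{trace} and~\ref{theta}, a negative closed walk of length $2^k$ follows, and the parity observation (any proper repetition factor of $2^k$ is even, hence kills the sign) shows the walk is non-repetitive. The paper additionally spells out why the resulting fixed point of $f^{2^k}$ cannot be an endpoint of the subinterval (again using the negative orientation), a detail you fold into the previously stated walk-to-periodic-point correspondence, which is acceptable since the paper records that correspondence just before the lemma.
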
 

\begin{proof}
Since $n$ is not a divisor of $2^k$ we know that $\theta^{2^k}$ does not fix any of the integers in $\{1, 2, \dots, n\}$. Theorem \ref{trace} shows
that the trace of the Oriented Markov Matrix of $L_{\theta^{2^k}}$ is $-1$. Then Theorem \ref{theta} tells us that the trace of the Oriented Markov Matrix of $L_\theta^{2^k}$ is $-1$. This means that there is at least one negative entry on the main diagonal.
Thus by Theorem \ref{ompower} the Oriented Markov Graph has a vertex $E_j$ with a closed walk from $E_j$ to itself of length $2^k$ with negative orientation. Since the orientation is negative it cannot be the
repetition of a shorter closed walk, as any shorter closed walk would have to be repeated an even number of times. 

We have shown that there is a non-repetitive closed walk of length $2^k$ from and to $E_j$. This means that there is a closed subinterval $J \subseteq E_j$ that gets mapped onto $E_j$ by $f^{2^k}$. As pointed out before, the endpoints of $E_j$ might belong to other intervals and we have to be careful that the closed walk is not describing one of the endpoints of $E_j$. However, since $J$ gets mapped onto $E_j$ with negative orientation the point(s) that are fixed by $f^{2^k}$ must be interior point(s). Once we know that the point is not an endpoint, the fact that the walk is non-repetitive means that the fixed points under $f^{2^k}$ have minimum period of $2^k$ under $f$. \end{proof}

This lemma shows that if $m<n$, then $2^m \triangleleft 2^n$. It also shows that if $k$ is not a power of $2$ then $k$ forces all powers of $2$. So we know \[1\triangleleft 2 \triangleleft 4 \ \triangleleft 8 \triangleleft \dots\] and that all other positive integers are to the right of the dots.

\begin{lemma} \label{2kr} If $n=2^kr$, where $r>1$ is odd and $k \geq 0$. Then $f$
must have a periodic point with least period
$2^ks$ for any $s> r$.
\end{lemma}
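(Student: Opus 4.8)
The plan is to move the entire question onto the oriented Markov matrix $OM$ of $L_\theta$ and to read off the required periodic points from the diagonals of its powers. First I would record the structural fact that makes everything go: since the orbit $\{1,\dots,n\}$ has least period $n$, the permutation $\theta$ is a single $n$-cycle. Hence $\theta^m$ is the identity exactly when $n\mid m$, and when $n\nmid m$ the power $\theta^m$ is a product of nontrivial cycles and so fixes none of $1,\dots,n$. Feeding this dichotomy into Theorems~\ref{trace}, \ref{theta}, and \ref{identity} yields the clean trace formula $\tr(OM^m)=-1$ whenever $n\nmid m$, and $\tr(OM^m)=n-1$ when $n\mid m$. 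This formula is the engine for the rest.

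Next I would extract negative closed walks. By Theorem~\ref{ompower} the $j$th diagonal entry of $OM^m$ is the number of positive minus the number of negative closed walks of length $m$ based at $E_j$, so $\tr(OM^m)$ is the total signed count. Writing $N(m)$ for the number of negative closed walks of length $m$, nonnegativity of the positive count together with $\tr(OM^m)=-1$ forces $N(m)\ge 1$ for every $m$ with $n\nmid m$. Applying this at $\ell:=2^k s$ (when $r\nmid s$), and at the divisors of $\ell$ in general, produces a rich supply of negative closed walks of the relevant lengths, and the orientation-parity fact that an even repetition of any walk is positive is what will ultimately pin down least periods.

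The crux, and the step I expect to be the main obstacle, is upgrading ``there is a negative closed walk of length $\ell$'' to ``there is a \emph{non-repetitive} negative closed walk of length $\ell$.'' In the previous lemma this was automatic, since a negative walk of length $2^k$ can be neither an even repetition nor (for lack of an odd factor) an odd one. For $\ell=2^k s$ with $s$ carrying an odd factor $>1$ two difficulties appear. When $r\mid s$ we have $n\mid\ell$, so $\tr(OM^\ell)=n-1>0$ and no negative loop of length $\ell$ is visible in the trace at all; the negative primitive loops must be disentangled from the positive even-repetitions and the identity contribution. Even when $r\nmid s$, a negative loop of length $\ell$ may be an odd-fold repetition of a shorter negative primitive loop. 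To handle both I would count: writing $q(d)$ for the number of primitive negative loops of length $d$, the parity fact gives $N(\ell)=\sum_{d\mid \ell,\ \ell/d\ \mathrm{odd}} q(d)$, and M\"obius inversion over the odd divisors of $\ell$ expresses $q(\ell)$ as an alternating sum of the $N(\cdot)$ at the shorter lengths with odd cofactor. The hypothesis $s>r$ is what makes these subtracted loops genuinely shorter; the delicate point is to show their odd-fold repetitions cannot account for every negative loop of length $\ell$, i.e. that $q(\ell)>0$. I expect this to require a lower bound on the total number of closed walks (a growth estimate) in addition to the pure sign information from $OM$.

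Finally, once a primitive negative closed walk of length $\ell$ is in hand, the correspondence already set up in the paper finishes the argument: negative orientation places the fixed point of $f^\ell$ in the interior of the starting interval, so it avoids the ambiguous endpoints, and non-repetitiveness makes its least period exactly $\ell=2^k s$ for $L_\theta$, hence for $f$. I would arrange the count so that odd $s$ and even $s$ (the latter amounting to a larger power of two with smaller odd part) are treated uniformly. Should the alternating-sum/growth bound prove awkward, the fallback is to construct the loops explicitly: locate in the Markov graph a negative short cycle together with a longer cycle through a common vertex, and for each length $>r$ build a negative closed walk that visits a distinguished vertex exactly once, which is manifestly non-repetitive and bypasses any growth estimate.
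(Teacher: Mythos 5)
You have correctly reduced the problem to producing a \emph{non-repetitive} negative closed walk of length $2^k s$, and you have correctly identified that this is the crux; but your primary route does not close it, and your own hedging shows why. The M\"obius-inversion identity $N(\ell)=\sum_{d\mid \ell,\ \ell/d\ \text{odd}} q(d)$ lets you write $q(\ell)$ as $N(\ell)$ minus contributions from shorter lengths, so to conclude $q(\ell)>0$ you need \emph{upper} bounds on the $N(d)$ for proper divisors $d$, or a lower bound on $N(\ell)$ that dominates them. The oriented matrix only gives you the signed difference $P(m)-N(m)=-1$ (resp.\ $n-1$), hence only the one-sided bound $N(m)\ge 1$; no growth estimate is available from this data, and the unoriented matrix counts all walks rather than negative ones. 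So the counting argument cannot be completed as described, and the fallback sentence --- find a negative closed walk ``that visits a distinguished vertex exactly once'' --- is not something you can guarantee and is not what is needed.

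The paper closes the gap with an explicit construction that you should adopt. Write $s-r=2^p q$ with $q$ odd. Since $n\nmid 2^{k+p}$, the trace argument gives a negative closed walk of length $2^{k+p}$ based at some vertex $E$; since $OM^{2^k r}$ is the identity, \emph{every} vertex, in particular $E$, also carries a positive closed walk of length $2^k r$. Concatenating $q$ copies of the first with one copy of the second yields a negative closed walk of length $2^{k+p}q+2^k r=2^k s$ based at $E$. Because the two ingredient walks have opposite orientations and $r$ is odd, they cannot both be powers of a single prime closed walk at $E$, so the decomposition of the length-$2^k s$ walk into prime closed walks based at $E$ involves at least two distinct primes; rearranging that decomposition so that all $i$ copies of one prime $P$ come first, followed by the remaining (non-$P$) primes, produces a walk that cannot be a $j$-fold repetition for any $j\ge 2$ --- a repetition would force a copy of $P$ to reappear after position $i$. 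This rearrangement trick is the missing idea: non-repetitiveness is certified combinatorially from the prime decomposition, not from a count and not from visiting a vertex only once. Note also that this handles the case $r\mid s$ (where $n\mid 2^k s$ and the trace of $OM^{2^k s}$ is positive) with no extra work, since the construction never looks at $OM^{2^k s}$ itself.
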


\begin{proof}

Let $s-r=2^pq$, where $q$ is odd.
The permutation $\theta^{2^{k+p}}$ consists of $2^k$ cycles, each of which has length $r$. In particular, $\theta^{2^{k+p}}$ does not fix any of the integers, and so by 
Theorem \ref{trace} we know that the trace of the Oriented Markov Matrix of $L_{\theta^{2^{k+p}}}$ is $-1$. Since the Oriented Markov Matrix of $L_{\theta^{2^{k+p}}}$ equals the Oriented Markov Matrix of $L_\theta^{2^{k+p}}$ (Theorem \ref{theta}),  the Oriented Markov Matrix of $\theta$ raised to the $2^{k+p}$th power has trace $-1$. Thus there is a closed walk of length $2^{k+p}$ with negative orientation in the Oriented Markov Graph of $\theta$.

 Since $\theta^{2^kr}$ is the identity we know that the Oriented Markov Matrix of $\theta$ raised to the $2^kr$ power is the identity matrix, and so there is a
closed walk of length
$2^kr$ with positive orientation for each of the vertices in the oriented Markov graph of $\theta$. Thus there exists a vertex, which we will denote by $E$, that has a closed walk of
length
$2^{k+p}$ and a closed walk of length $2^kr$.

 We say a closed walk from $E$ to itself is {\em prime} if $E$ does not appear in the walk except at the end vertices.

Since the closed walk of length $2^kr$ has positive orientation, the closed walk of
length
$2^{k+p}$ has negative orientation, and $r$ is odd, it follows that these closed walks cannot just be the repetition  of the same prime closed walk. So there must be at least two prime closed walks within these two closed walks.
 We now show how a non-repetitive closed walk of length $2^ks$ can be constructed.
 
  A  closed walk with negative orientation of length $2^ks$ can be found by going  $q$ times around the closed walk of length $2^{k+p}$ and then going around the closed walk of length $2^kr$ once.
This closed walk of length $2^ks$  must contain at least two distinct prime closed walks since $q \geq 1$. Let $P$ denote a prime closed walk contained as a subpath of the  closed walk of length $2^ks$, and suppose that it appears $i$ times. Construct a new closed walk by re-arranging the prime closed walks that make the closed walk of length $2^ks$. Start with the $i$ copies of $P$ followed by the other prime closed walks in any order. This results in a non-repetitive closed walk of length $2^ks$ that belongs to the Markov graph of $L_\theta$.

 Again, we conclude that since the Oriented Markov Graph of $L_\theta$ must have a negative, non-repetitive closed walk of length $2^ks$ that $f$ must have a periodic point with least period $2^ks$.

\end{proof}

This lemma shows that  $ \dots 2^k7 \triangleleft 2^k5 \triangleleft 2^k3$ for any $k \geq 0$. So, combining the results of the two lemmas so far we obtain \[1\triangleleft 2 \triangleleft 4 \ \triangleleft 8 \triangleleft \dots \dots 2^k7 \triangleleft 2^k5 \triangleleft 2^k3. \]

To complete the proof of Sharkovsky's theorem we need to see how integers of the form $2^k$ times an odd relate to integers of the form $2^{k+1}$ times an odd. Notice that if we can show that for all $k$ that $2^k$ times an odd forces $2^{k+1}3$ then we are done, and this is exactly what we shall do. But it is worth noticing, and we will elaborate in the next section, that the previous lemma does give us more information. In the statement of the lemma there is no assumption that $s$ is odd. So for example, we can use the Lemma $3$ to show that $11$ forces $22$ and that $22$ forces $44$.

\section{First digression}
In this section we begin by re-stating what we have proved. We then comment on how this approach can be generalized to maps on graphs.

The Sharkovsky ordering can be defined as follows: 
\begin{enumerate}
\item $2^l \triangleleft 2^k$ if $k \geq l$.
\item If $n=2^ks$, where $s>1$ is odd, then

\begin{enumerate}
\item $2^l \triangleleft n$, for all non-negative integers $l$.
\item $2^kr \triangleleft n$, where $r\geq s$ and $r$ is odd.
\item $2^lr \triangleleft n$, where $l>k$ and $r>1$ is odd.

\end{enumerate}
\end{enumerate}

We can re-write Lemmas $2$ and $3$ using similar terms.

\begin{theorem}
Let $f: \mathbb{R} \to \mathbb{R}$ be continuous. If $f$ has a periodic point of least period $n$.
Then 
\begin{enumerate}
\item If $n=2^k$, then there must be periodic points of least period $2^l$ for any $l \leq k$.
\item If $n=2^ks$, where $s>1$ is odd, then

\begin{enumerate}
\item there are periodic points with minimum period $2^l$ for all positive integers $l$,
\item there are periodic points with minimum period $2^kr$ for any $r\geq s$ and $r$ is odd.
\item there are periodic points with minimum period $2^lr$ for all $l$ and $r$ satisfying: $l>k$, $r>1$ is odd, and $2^{l-k}r>s$.

\end{enumerate}
\end{enumerate}
\end{theorem}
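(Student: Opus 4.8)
The plan is to recognize that this theorem is simply a repackaging of the two lemmas already established, so the whole argument reduces to matching each stated conclusion to the correct lemma and checking a divisibility or inequality condition. I would organize the write-up around the source of each conclusion: parts (1) and (2a) both issue from Lemma 2 (the statement that $n\nmid 2^j$ forces a point of least period $2^j$), while parts (2b) and (2c) both issue from Lemma~\ref{2kr}. Thus I would prove the power-of-two conclusions first and the mixed conclusions second.

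First I would dispose of parts (1) and (2a) using Lemma 2, where the only thing to verify is when $n$ fails to divide a given power $2^l$. If $n=2^k$, then $n\nmid 2^l$ precisely when $l<k$, so Lemma 2 supplies a point of least period $2^l$ for every $l<k$; the case $l=k$ is the orbit we began with, and together these cover all $l\le k$, giving (1). If instead $n=2^ks$ with $s>1$ odd, the odd factor $s$ guarantees $n\nmid 2^l$ for every $l$, so Lemma 2 yields a point of least period $2^l$ for all $l$, which is exactly (2a).

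Next I would derive parts (2b) and (2c) from Lemma~\ref{2kr}, applied to the given orbit $n=2^ks$ so that the lemma's odd part is $s$ and its power of two is $k$; the lemma then produces a point of least period $2^k c$ for every integer $c>s$. For (2b) I take $c=r$ with $r\ge s$ odd: when $r>s$ the lemma applies directly, and $r=s$ is again the original orbit. For (2c) I rewrite the target as $2^l r = 2^k\cdot 2^{\,l-k}r$ and set $c=2^{\,l-k}r$; since $l>k$ this $c$ is a genuine integer, and the hypothesis $2^{\,l-k}r>s$ is exactly the requirement $c>s$, so Lemma~\ref{2kr} delivers a point of least period $2^k c = 2^l r$.

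The argument has no genuine obstacle; the only care needed is boundary bookkeeping, which is also where I expect the write-up to be most error-prone. One must keep the inequalities in Lemma~\ref{2kr} strict, handling $r=s$ in (2b) (and more generally the already-given orbit) separately rather than via the lemma, and one must use that the least period produced by Lemma~\ref{2kr} is \emph{exactly} $2^k c$, so that the substitution $c=2^{\,l-k}r$ yields least period precisely $2^l r$ and not some proper divisor. Finally, the fact that $c=2^{\,l-k}r$ actually exceeds $s$ is not automatic, and this is precisely why the extra hypothesis $2^{\,l-k}r>s$ must be imposed in part (2c).
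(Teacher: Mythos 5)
Your proposal is correct and takes essentially the same route as the paper, which simply observes that parts (1) and (2a) follow from Lemma 2 and parts (2b) and (2c) from Lemma~\ref{2kr}; you have merely spelled out the substitutions (including the key point that Lemma~\ref{2kr} does not require the multiplier to be odd, which is what makes part (2c) work). No issues.
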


\begin{proof}
The statements involving points with least period $2^l$ follow immediately from Lemma $2$. 

The last two statements follow from Lemma $3$.

\end{proof}

The purpose of stating the above theorem is twofold. It shows that the difference between the Sharkovsky ordering of the positive integers and the ordering given by Lemmas $2$ and $3$ is not major. The difference in the periods forced by a given $n$ with respect the two orderings only differs for positive integers less than $n$. The second reason for stating this result is that it extends to more general cases.

So far we have looked at examples similar to the one pictured in Figure 1. The underlying object, instead of being thought of as a subinterval of the real line, can be considered as a {\em graph}, in the combinatorial sense, consisting of edges and vertices. In this example, the graph would have three edges that are labeled $E_1$, $E_2$ and $E_3$ and four vertices labeled $1$, $2$, $3$ and $4$. A natural way to generalize is to consider the underlying object to be a general graph $G$, not necessarily homeomorphic to an interval,  and the map to be a map $f: G\to G$ that permutes the vertices. We will call such a map a {\em vertex map}.

For example, we could consider the following graph that is topologically a circle depicted in Figure 7.

\begin{centering}
\begin{figure}
\begin{tikzpicture}[scale=5]

\draw (0,0) circle (.4);
\draw [fill=red] (0,-.4) circle (.3pt);
\draw [fill=red] (.4,0) circle (.3pt);
\draw [fill=red] (0,.4) circle (.3pt);
\draw [fill=red] (-.4,0) circle (.3pt);
 \draw[dotted, thick, ->, blue] (-.02,.38) to [bend left=30] (-.38,0.02);
  \draw[dotted, thick, ->, blue] (-.38,-.02) to [bend left=30] (-.02,-.38);
   \draw[dotted, thick, ->, blue] (.02,-.38) to [bend left=30] (.38,-.02);
    \draw[dotted, thick, ->, blue] (.38,.02) to [bend left=30] (.02, .38);
    
        \draw {(0,.44)} node[font=\tiny]{$1$};
        
        \draw {(-.44,0)} node[font=\tiny]{$2$};
         \draw {(0,-.44)} node[font=\tiny]{$3$};
          \draw {(.44,0)} node[font=\tiny]{$4$};
          
          \draw {(-.35,.35)} node [font=\tiny]{$E_1$};
          \draw {(-.35,-.35)} node [font=\tiny]{$E_2$};
    \draw {(.35,-.35)} node [font=\tiny]{$E_3$};
    \draw {(.35,.35)} node [font=\tiny]{$E_4$};

 \end{tikzpicture}
 \caption{Circle as graph}
\end{figure}
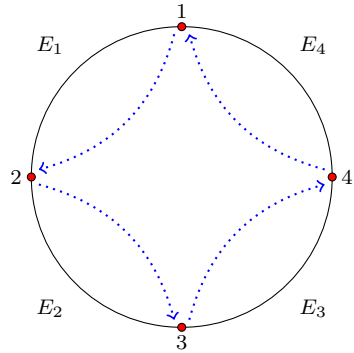
\end{centering}

\noindent

Once we move from the underlying space being the real line (or unit interval) to more general objects it becomes difficult to draw the graph of the map. However, it is clear that for the unit interval there is a natural correspondence between the connect-the-dot graphs and oriented Markov matrices. For general graphs we will usually not attempt to draw the graphs of the underlying maps, but will picture this information by using the oriented Markov matrices.

An other important change is that knowing where the vertices get mapped to does not necessarily tell us where the edges get mapped. In the example depicted in Figure 7, $E_1$ has vertices $1$ and $2$. These vertices get mapped to $2$ and $3$, respectively. But notice that there are two intervals that have these endpoints; the interval $E_2$ and the interval that goes from $2$ to $3$ in the counterclockwise direction. In order to define the underlying map we will give both the picture of the underling graph and its Oriented Markov Matrix.

 We have seen that the trace of the oriented Markov matrix gives useful dynamical information, and so we study the traces of Oriented Markov Matrices in the more general case of vertex maps on graphs. In several cases it is possible to arrive at the ordering given by the above theorem by using arguments similar to the ones we have used about the trace of the Oriented Markov Matrix.
 
 \section{Vertex maps on trees}
 
 In \cite{B2, B3} the maps on trees for which the vertices form one periodic orbit of period $n$ are studied. The arguments that we have used in this paper generalize straightforwardly to the tree case.

\begin{lemma}
Given a tree $T$ with $v$ vertices and a map $f:T \to T$ that permutes the vertices, if none of the vertices are fixed under $f$, then the trace of the Oriented Markov Matrix  is $-1$.
\end{lemma}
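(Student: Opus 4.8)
The plan is to mimic the interval argument of Theorem~\ref{trace}, but since there is no longer a planar graph of the map whose crossings with the diagonal we can count, I would replace that crossing count by a short piece of linear algebra on the incidence structure of $T$. Orient each of the $v-1$ edges $E_1,\dots,E_{v-1}$, and introduce three matrices: the vertex permutation matrix $P$ (a $v\times v$ matrix recording the action of $f$ on vertices), the oriented incidence matrix $D$ (a $v\times(v-1)$ matrix with $D_{w,i}$ equal to $+1,-1,0$ according as $w$ is the head, the tail, or neither endpoint of $E_i$), and the Oriented Markov Matrix $OM$. The key observation is that the $i$th column of $OM$ records exactly the signed sequence of edges the image $f(E_i)$ traverses: on a tree the image of an edge is the unique reduced path joining the images of its two endpoints, so reading off this path (with signs for orientation preservation or reversal) gives precisely the entries $e_{\cdot,i}$. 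This is the natural tree analogue of the connect-the-dots description used on the interval.

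First I would establish the commutation relation $PD=D\,OM$. This is the one place the definition of $OM$ really gets used: the $i$th column of $D\,OM$ is the boundary of the path $f$ assigns to $E_i$, which telescopes to $f(\text{head of }E_i)-f(\text{tail of }E_i)$, while the $i$th column of $PD$ is $P$ applied to $(\text{head}-\text{tail})$, giving the same vertex difference. Verifying this identity is the main technical step, and the point at which one must pin down exactly how a vertex map acts on the edges of a tree.

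Next I would bring in the tree hypothesis. Since $T$ has no cycles the columns of $D$ are independent, so $D$ is injective; since $T$ is connected its image is exactly the hyperplane $\{x:\sum_w x_w=0\}$, the kernel of the augmentation $\epsilon=(1,\dots,1)$, of codimension one. Because $P$ merely permutes coordinates it satisfies $\epsilon P=\epsilon$, so $P$ preserves $\operatorname{im}D$ and acts as the identity on the one-dimensional quotient $\mathbb{R}^v/\operatorname{im}D$, contributing trace $1$ there. As $D$ is an isomorphism onto its image and $PD=D\,OM$, the restriction of $P$ to $\operatorname{im}D$ is conjugate to $OM$ and hence has the same trace. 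Adding the two contributions gives $\tr(P)=\tr(OM)+1$.

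Finally, the assumption that no vertex is fixed gives $\tr(P)=0$, whence $\tr(OM)=-1$. I expect the only genuine obstacle to be the bookkeeping in the commutation relation $PD=D\,OM$; once that is in hand the conclusion is forced. It is worth flagging where the tree structure enters: the $+1$ separating $\tr(P)$ from $\tr(OM)$ is the Euler characteristic $v-(v-1)=1$ of a tree, so the same method applied to the circle of Figure~7 (Euler characteristic $0$) would instead yield trace $0$, which is indeed the correct generalization in that case.
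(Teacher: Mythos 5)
Your argument is correct, but it is not the paper's argument: you have written out the homological (Lefschetz--number) proof, which the paper explicitly acknowledges elsewhere as an alternative and deliberately avoids as ``less elementary.'' The paper's own proof is a direct combinatorial count: for each vertex $v_i$ place a dot on the \emph{first} edge of the reduced path from $v_i$ to $f(v_i)$ (the hypothesis that no vertex is fixed guarantees each vertex contributes exactly one dot); a short case analysis shows that the edge $E_i$ carries exactly $1-M_{ii}$ dots, according as $-E_i$, neither $E_i$ nor $-E_i$, or $E_i$ occurs in the reduced path $f(E_i)$; counting the $v$ dots edge by edge then gives $v=\sum_{i=1}^{e}(1-M_{ii})=e-\tr(M)$, whence $\tr(M)=e-v=-1$. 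Your route packages the same information differently: the intertwining relation $PD=D\,OM$, together with the facts that $D$ is injective with image the augmentation hyperplane, yields the identity $\tr(P)=\tr(OM)+1$ for \emph{every} vertex map on a tree, and the fixed-point-free hypothesis enters only at the last line via $\tr(P)=0$. What your version buys is conceptual clarity --- the $+1$ is visibly the Euler characteristic, and the argument signals in advance how the statement must change on graphs with cycles --- at the cost of the linear algebra of incidence matrices. What the paper's version buys is complete self-containedness, in keeping with its stated aim of accessibility to undergraduates. One caveat on your closing aside: on the circle $D$ is no longer injective, so your two-block decomposition of $\tr(P)$ must be replaced by the full alternating sum over the chain complex, and the answer then depends on the degree of the map; it happens to be $0$ for the rotation of Figure 7 but is not $0$ in general.
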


\begin{proof}
For each vertex, $v_i$ there is a reduced path from
$v_i$ to  $f(v_i)$. Put a dot on the first edge in this path. 

Observe that an edge $E_i$ contains two dots if and only
if $-E_i$ is in the reduced path that corresponds to $f(E_i)$. Also observe that $E_i$ contains no
dots if and only if $E_i$ is in the reduced path corresponding to $f(E_i)$. Finally, an
edge contains one dot if and only if the reduced path of $f(E_i)$ does not
contain either $E_i$ or $-E_i$. Notice that the number of dots on
the edge $E_i$ is exactly $1-M_{ii}$. If $e$ denotes the number of edges in $T$,  the total number of
dots is $\sum_1^e(1-M_{ii})=e-\tr(M)$. However, there are
exactly $v$ dots on $T$, so
$v=e-\tr(M)$, and $\tr(M)=e-v=-1$.
\end{proof}

The above result is the extension of Theorem $5$ to trees. It is straightforward to show that if all the vertices are fixed by a continuous map of a tree, then its Oriented Markov matrix is the identity. Arguments similar to those in section $8$ enable us to prover the following lemma.
 
  \begin{theorem}
Let $f: T \to T$ be a map from a tree $T$ with $n$ vertices to itself. If the vertices form one periodic orbit under $f$, 
then 
\begin{enumerate}
\item If $n=2^k$, then there must be periodic points of least period $2^l$ for any $l \leq k$.
\item If $n=2^ks$, where $s>1$ is odd, then

\begin{enumerate}
\item there are periodic points with minimum period $2^l$ for all positive integers $l$,
\item there are periodic points with minimum period $2^kr$ for any $r\geq s$ and $r$ is odd.
\item there are periodic points with minimum period $2^lr$ for all $l$ and $r$ satisfying: $l>k$, $r>1$ is odd, and $2^{l-k}r>s$.

\end{enumerate}
\end{enumerate}
\end{theorem}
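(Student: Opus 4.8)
\emph{Setup and tools.} The plan is to push every assertion onto the oriented Markov matrix $M$ of $f$, and then to reproduce in the graph setting the combinatorial arguments of Section 8. Since the $n$ vertices form a single periodic orbit, the induced permutation $\theta$ is an $n$-cycle; consequently $\theta^j$ fixes some vertex if and only if $\theta^j$ is the identity, i.e. if and only if $n \mid j$. First I would record three ingredients. \textbf{(i) Powers.} By Theorem~\ref{ompower} the $(i,l)$ entry of $M^j$ is the signed count of length-$j$ walks from $E_l$ to $E_i$; the nested-subpath construction identifies such walks with the subpaths of $E_l$ carried onto $E_i$ by $f^j$, so $M^j$ is exactly the oriented Markov matrix of $f^j$ (this is the tree analogue of Theorem~\ref{theta}). \textbf{(ii) Trace.} Combining (i) with the preceding tree trace lemma, whenever $n \nmid j$ the map $f^j$ fixes no vertex and hence $\tr(M^j) = -1$. \textbf{(iii) Identity.} When $n \mid j$ the map $f^j$ fixes every vertex, so $M^j$ is the identity. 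Finally, a negative non-repetitive closed walk of length $m$ at a vertex $E$ yields, via the signed nested-subpath construction and the Intermediate Value Theorem, a point fixed by $f^m$ in the \emph{interior} of $E$, and non-repetitiveness makes its least period exactly $m$.

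\emph{The powers of two (cases (1) and (2a)).} For any $l$ with $n \nmid 2^l$, ingredient (ii) gives $\tr(M^{2^l}) = -1$, so some diagonal entry of $M^{2^l}$ is negative and there is a negative closed walk of length $2^l$. A negative walk cannot be the repetition of a shorter one, since that would repeat it an even number of times and produce a positive walk; so it is non-repetitive and produces a point of least period $2^l$. In case (1), $n = 2^k$, this covers every $l < k$, while $l = k$ is realized by the vertex orbit itself, which has least period $n = 2^k$. In case (2), $n = 2^k s$ with $s>1$ odd, we have $n \nmid 2^l$ for \emph{every} $l \ge 1$ because of the odd factor $s$, so all least periods $2^l$ occur.

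\emph{The odd part (cases (2b) and (2c)).} Both reduce to the graph analogue of Lemma~\ref{2kr}: if $n = 2^k s$ with $s>1$ odd, then $f$ has a point of least period $2^k t$ for every integer $t > s$. I would prove this by copying Section 8. Writing $t - s = 2^p q$ with $q$ odd, the power $\theta^{2^{k+p}}$ splits into $2^k$ cycles of length $s > 1$, so it fixes no vertex and (ii) yields a negative closed walk of length $2^{k+p}$ at some vertex; applying (iii) to $\theta^n = \mathrm{id}$ yields a positive closed walk of length $n = 2^k s$ at every vertex. Hence some vertex $E$ carries both. Concatenating $q$ copies of the negative walk with one copy of the positive walk gives a closed walk of length $q\,2^{k+p} + 2^k s = 2^k t$ whose sign is negative, as $q$ is odd. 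Because one walk is positive, the other negative, and $s$ is odd, the two cannot both be powers of a single prime closed walk, so at least two distinct prime closed walks occur in the concatenation; regrouping them (all copies of one prime walk first, then the rest) produces a non-repetitive negative closed walk of length $2^k t$, hence a point of least period $2^k t$. Taking $t = r$ odd with $r > s$ gives (2b), while $r = s$ is again the vertex orbit; and writing $2^l r = 2^k\bigl(2^{l-k} r\bigr)$ with $t = 2^{l-k} r > s$ (possible since $l>k$) gives (2c).

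\emph{Main obstacle.} The delicate step is the regrouping and primality argument in the Lemma~\ref{2kr} analogue, but it is verbatim the interval argument because it lives entirely inside the oriented Markov graph and never refers to the ambient space. What genuinely needs the tree hypothesis is only ingredient (i) and the interior-fixed-point claim, and both are checked edge by edge, each edge being homeomorphic to an interval, so the Section-8 reasoning transfers. The point I would be most careful about is the interior-fixed-point claim, since it is what guarantees that the constructed periodic point is not one of the vertices and that its least period is exactly the length of the walk rather than a proper divisor.
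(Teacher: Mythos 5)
Your proposal is correct and follows exactly the route the paper intends: the tree trace lemma plus the multiplicativity and identity properties of the oriented Markov matrix, fed into the walk-counting and prime-walk-regrouping arguments of Section 8. The paper itself only gestures at this ("arguments similar to those in Section 8"), so you have simply supplied the details it omits, at essentially the same level of rigor.
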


\section{Vertex maps on graphs}

Similar types of arguments can also be used for vertex maps from graphs to themselves. In the case when the map is homotopic to the constant map, it is shown in \cite{BGJR}  that one obtains exactly the same ordering as in Theorem $8$, and in \cite{B5} it is shown that  the same result holds for maps on graphs that are homotopic to the identity map and that flip an edge (i.e., there is an edge in the graph that gets mapped onto itself with its orientation reversed). 

The purpose of these comments is to show that the arguments that we have considered so far generalize in a natural way to maps on graphs. These arguments about periodic orbits can often be reduced to elementary arguments involving the traces of matrices, and these are accessible to undergraduates. Indeed, \cite{BGJR} was written as part of an REU project at Fairfield University.

The arguments that we use are elementary, but they are essentially homological in nature. The Oriented Markov matrix is the matrix for the map on one-chains. The Lefschetz number can be calculated using the alternating sum of the traces of the matrices of the chain groups. Since the Lefschetz number is a homotopy invariant, we could use the Lefschetz number to calculate the trace of the Oriented Markov matrix (see \cite{B5} for more explanation on this), but this approach is less elementary.

\section{Completion of proof of Sharkovsky's theorem}

As noted above, to complete the proof we need to show that if $f$ has a periodic point of least period $2^k$ times a odd integer then it has a periodic point with least period $2^{k+1}3$. In the first part of the proof we used the fact that a negatively oriented, non-repetitive closed walk of length $m$ in the Oriented Markov Graph of $L_\theta$ tells us that there must be periodic points with least period $m$ for both $f$ and $L_\theta$. Our next result shows that in certain cases we can use periodic points of $L_\theta$ to deduce the existence of closed walks in the Oriented Markov Graph.

Suppose we are given a connect-the-dots map $L_\theta$ and a periodic point $p$ with least period $m$ such that none of the iterates of $p$ are critical points of $L_\theta$. Then $L_\theta^m$ is differentiable at $p$ and so we can assign an orientation to $L_\theta^m$ at $p$ given by the sign of the derivative. 

\begin{lemma}
Let $L_\theta$ be a connect-the-dots map. Suppose there exists a periodic point $p$ with least period $m$ such that none of the iterates of $p$ are critical points of $L_\theta$  and such that $L_\theta^m$ at $p$ has negative orientation. Then there is a non-repetitive closed walk in the Oriented Markov Graph of length $m$ with negative orientation.
\end{lemma}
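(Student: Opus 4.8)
The plan is to read a closed walk directly off the orbit of $p$ and then check, in turn, its length, its orientation, and its non-repetitiveness. Since no iterate of $p$ is a critical point of $L_\theta$ (the vertices are the only points where $L_\theta$ fails to be differentiable), each point $p_k = L_\theta^{k-1}(p)$ lies in the interior of a unique edge $E_{i_k}$, for $k=1,\dots,m$, with $p_{m+1}=p_1$. Because $L_\theta$ sends vertices to vertices, $L_\theta(E_{i_k})$ is a union of whole edges, and it contains the interior point $p_{k+1}$ of $E_{i_{k+1}}$; hence $E_{i_{k+1}}\subseteq L_\theta(E_{i_k})$ and there is a directed edge $E_{i_k}\to E_{i_{k+1}}$ in the Markov Graph. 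This produces a closed walk $W=E_{i_1}E_{i_2}\cdots E_{i_m}E_{i_1}$ of length $m$.

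Next I would determine the orientation of $W$. On each edge $E_{i_k}$ the map $L_\theta$ is affine, so the directed edge $E_{i_k}\to E_{i_{k+1}}$ carries a plus sign exactly when $L_\theta$ is increasing on $E_{i_k}$, i.e.\ when $L_\theta'(p_k)>0$, and a minus sign otherwise. Thus the sign of the $k$-th edge equals the sign of $L_\theta'(p_k)$, and the orientation of $W$, being the product of these signs, equals the sign of $\prod_{k=1}^m L_\theta'(p_k)=(L_\theta^m)'(p)$. By hypothesis this derivative is negative, so $W$ has negative orientation.

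The crux is non-repetitiveness, and this is where I expect the real work. Suppose $W$ were repetitive, so $W=V^d$ with $V$ non-repetitive and $d\geq 2$; then the orientation of $W$ equals that of $V$ raised to the power $d$, so negativity of $W$ forces $V$ to be negative and $d$ to be odd, whence $d\geq 3$. Writing $\ell=m/d$ and $g=L_\theta^\ell$, the orbit of $p$ under $g$ is a single cycle of least period $m/\ell=d$, and every point of this cycle has the same $\ell$-step itinerary $V$ and therefore lies in the edge $E_{i_1}$. The decisive observation is that, because $L_\theta$ is monotone on each individual edge, the nested preimages that cut out the itinerary $V$ inside $E_{i_1}$ shrink to a \emph{single} subinterval $J$, on which $g=L_\theta^\ell$ is itself monotone and maps $J$ affinely onto all of $E_{i_1}$. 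Hence the entire $d$-cycle lies in $J$ and is a periodic orbit of the affine map $g|_J$. But an affine map has no periodic orbit of period greater than two, contradicting $d\geq 3$. Therefore $W$ cannot be repetitive, and $W$ is the desired walk.

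I expect the main obstacle to be exactly this last step. One must recognize that the hypothesis ``no iterate of $p$ is a critical point,'' together with the piecewise-linearity of $L_\theta$, forces the relevant return map $g|_J$ to be genuinely affine rather than merely piecewise affine, so that the elementary fact about affine maps applies. The subtlety worth guarding against is that a priori the itinerary $V$ might be realized by several subintervals of $E_{i_1}$; it is precisely the monotonicity of $L_\theta$ on each edge that collapses these to one interval and rules this out.
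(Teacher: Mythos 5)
Your proposal is correct and follows essentially the same route as the paper: read the walk off the itinerary of $p$, get its orientation from the sign of $(L_\theta^m)'(p)$, and refute repetitiveness by noting that the return map on the subinterval realizing the repeated block is affine, so it cannot carry a periodic orbit of period greater than two (the paper phrases this as the two-case analysis of an affine map of negative slope: fixed point, or slope $-1$ and period two, each contradicting the hypotheses). The only cosmetic difference is that you first deduce $d$ odd and $d\geq 3$ from the orientation before invoking the affine-map fact, while the paper folds the orientation contradiction into the period-two case.
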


\begin{proof}
Suppose that $L_\theta^i(p) \in E_{j_i}$ and $L_\theta^{i+1}(p) \in E_{j_{i+1}}$. Then a closed subinterval $J_{i} \subseteq E_{j_i}$ can be found such that $p \in J_i$ and such that $f(J_i)=E_{j_{i+1}}$. This means that there is a directed edge from $E_{j_i}$ to $E_{j_{i+1}}$ in the Markov Graph of $L_\theta$. Since $f$ restricted to $J_i$ has constant slope that is non-zero we can assign an orientation to get the appropriate oriented directed edge from $E_{j_i}$ to $E_{j_{i+1}}$ in the Oriented Markov Graph of $L_\theta$. This observation shows that there must be a closed walk in the Oriented Markov Graph that has length $m$ and has negative orientation. Now we must show that this walk is non-repetitive.

Suppose for a contradiction that the closed walk of length $m$ is the repetition of a shorter walk of length $k$, say. Then there exists a closed subinterval $K_0 \subseteq E_{i_0}$ such that $L_\theta^k(K_0)=E_{i_0}$. Notice that if $x$ has the property that $L_\theta^i(x) \in E_{j_i}$ for $0 \leq i \leq k$ then $x \in K_0$. This implies that $L_\theta^{ik}(p)\in K_0$ for all non-negative integers $i$.  Also note that $L_\theta^k$ has constant slope on $K_0$ and that this must negative. But $p$ is a periodic point of $L_\theta^k$ on $K_0$. There are only two possible ways of obtaining a periodic orbit on an interval with a map of constant negative slope, both of which result in a contradiction. Either the point is fixed, which would mean that $L_\theta^k(p)=p$ contradicting the fact that $k<p$, or the slope is $-1$ and $p$ has period two under $L_\theta^k$, but this means that $m=2k$ and that $p$ has positive orientation contradicting the fact that the orientation is negative.

 \end{proof}

To complete the proof of Sharkovsky's theorem we need the following technical result.

\begin{lemma} \label{horseshoe}
Let $f: \mathbb{R} \to \mathbb{R}$ be continuous. Suppose that there exists $a,b,c\in \mathbb{R}$ satisfying $f(b)\leq c<b<a=f(a)\leq f(c)$, then $f$ has periodic points of all least periods. These points can all be chosen so that the periodic orbits have negative orientation.
\end{lemma}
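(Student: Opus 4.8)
The plan is to reduce everything to a two-vertex oriented Markov graph and then re-use the correspondence, established earlier, between non-repetitive negatively-oriented closed walks and periodic points of the corresponding least period. First I would set $I = [c,b]$ and $J = [b,a]$, so that $b$ is the shared endpoint and $a$, the right endpoint of $J$, is fixed. From the hypotheses $f(b)\le c<b<a=f(a)\le f(c)$ together with the Intermediate Value Theorem I would extract the covering relations: $J$ covers both $I$ and $J$, and $I$ covers both $I$ and $J$. The point of using the \emph{oriented} graph is that these four coverings have predictable signs. On $J$ the left endpoint $b$ maps low ($f(b)\le c$) while the right endpoint $a$ is fixed, so every subinterval of $J$ realizing a covering can be chosen orientation preserving; hence both edges leaving $J$ are positive. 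On $I$ the left endpoint $c$ maps high ($f(c)\ge a$) while the right endpoint $b$ maps low ($f(b)\le c$), so the realizing subintervals reverse orientation and both edges leaving $I$ are negative.

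With the signs in hand, a closed walk is negatively oriented exactly when it leaves the vertex $I$ an odd number of times, i.e.\ visits $I$ an odd number of times. So for each $m \ge 1$ I would exhibit a non-repetitive closed walk of length $m$ that passes through $I$ exactly once. For $m \ge 2$ take the walk that runs $J \to J$ a total of $m-2$ times, then $J \to I$, then $I \to J$ back to the start; for $m = 1$ take the single loop $I \to I$. Each such walk contains exactly one edge leaving $I$, hence is negatively oriented, and because the vertex $I$ occurs exactly once the walk cannot be a power of a shorter closed walk, so it is non-repetitive.

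Finally I would feed these walks into the pullback argument used throughout the paper: a closed walk of length $m$ yields, via repeated application of the Intermediate Value Theorem, a subinterval of the initial edge that is mapped onto itself by $f^m$, and hence a point fixed by $f^m$. Because each walk is negatively oriented, this fixed point lies in the \emph{interior} of the starting interval, so it is none of the shared endpoints; combined with non-repetitiveness this forces the least period to be exactly $m$ and the orbit to have negative orientation, which is precisely the conclusion.

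The main obstacle is not the combinatorics — the one-excursion-to-$I$ walks are easy — but the verification that the correspondence ``non-repetitive negative closed walk $\Rightarrow$ least-period-$m$ point of negative orientation,'' which earlier in the paper was phrased for connect-the-dots maps $L_\theta$ on a single periodic orbit, applies verbatim to an arbitrary continuous $f$ equipped only with the two interval-edges $I$ and $J$. This requires checking that the orientation-controlled pullback of subintervals (choosing the endpoints of each preimage to match the prescribed orientation) goes through for general $f$, and that negative orientation genuinely confines the periodic point to the interior, so that the endpoint-sharing caveat noted for Markov graphs is avoided.
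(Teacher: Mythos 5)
Your proposal is correct and follows essentially the same route as the paper: it labels $[c,b]$ and $[b,a]$ as the two vertices of an oriented Markov graph, observes that both edges leaving $[b,a]$ are positive while both edges leaving $[c,b]$ are negative, and then produces for each $m$ a non-repetitive closed walk of negative orientation to be fed into the standard pullback argument. The paper dismisses the walk construction as ``straightforward to check''; your explicit walks (a single excursion to $[c,b]$, otherwise looping at $[b,a]$) simply make that step concrete.
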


\begin{proof}
Label the closed intervals $[c,b]$ and $[b,a]]$ by  $E_1$ and $E_2$ respectively. A picture of the situation is shown in Figure 8 and the corresponding Oriented Markov Graph is shown in Figure 9.

\begin{centering}
\begin{figure}
\begin{tikzpicture}[scale=6]

\draw[thick] (-.5,0) to (1.5,0);
  \draw[]{(1.55, -.07)} node {$f(c)$};
  \draw[]{(-.03, -.07)} node {$c$};
  \draw[]{(0.5 ,-.07)} node {$b$};
    \draw[]{(1 ,-.07)} node {$a$};
      \draw[]{(-.5, -.07)} node {$f(b)$};
    
     \draw[]{(.25 ,-.05)} node {$E_1$};
      \draw[]{(.75 ,-.05)} node {$E_2$};
  
   \draw[fill=red] (0, 0) circle (.3pt);
    \draw[fill=red] (.5, 0) circle (.3pt);
     \draw[fill=red] (1.5, 0) circle (.3pt);
        \draw[fill=red] (1, 0) circle (.3pt);
         \draw[fill=red] (-.5, 0) circle (.3pt);
  
  \draw[dotted, thick, ->, blue] (.49,-.01) to [bend left=45] (-0.48,-.01);
  \draw[dotted, thick, ->, blue] (1, -.01) ..controls (1.5,-.5) and (.5,-.5).. (.99,-.02);
    \draw[dotted, thick, ->, blue] (0, .01) to [bend left=30] (1.49,0.01);

 \end{tikzpicture}
 
 \caption{Diagram for Lemma $5$}
 \end{figure}
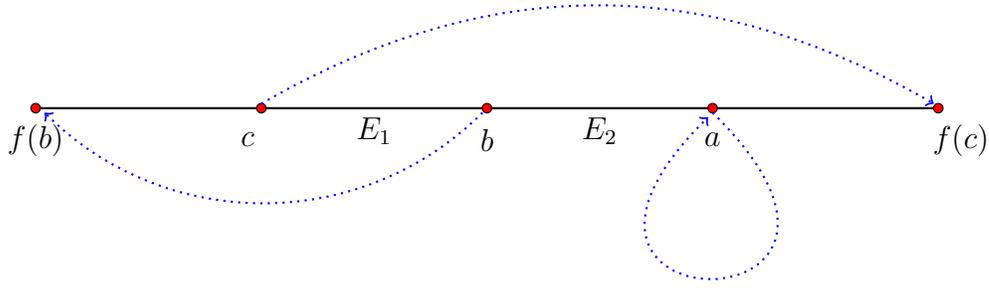
  \end{centering}

  \begin{centering}
\begin{figure}
\begin{tikzpicture}[scale=5]

  \draw[]{(-.02, 0)} node {$E_1$};
  \draw[]{(1.02,0)} node {$E_2$};

  \draw[ thick, ->] (0.01,0.04) to [bend left=30] node[font=\tiny, fill=white] {$-$}(.99,.04);
  \draw[ thick, ->] (1,-.04) to [bend left=30] node[font=\tiny, fill=white] {$+$}(.01,-.04);
    \draw[ thick, ->] (-.04,0.04)..controls (-.3,+.4) and (-.3,-.4)..  node[font=\tiny, fill=white] {$-$} (-.04,-.04);
     \draw[ thick, ->] (1.04,0.04)..controls (1.3,.4) and (1.3,-.4)..  node[font=\tiny, fill=white] {$+$} (1.04,-.04);

 \end{tikzpicture}

\caption{Oriented Markov Graph}
 
\end{figure}
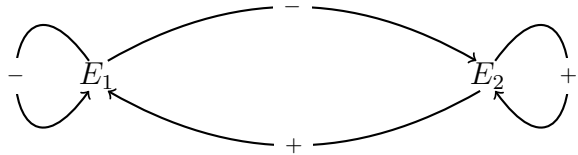
\end{centering}

It is straightforward to check that, given any positive integer $k$, we can find a non-repetitive, closed walk of length $k$ with negative orientation.
\end{proof}

\begin{theorem} \label{2diag}
Let $\theta$ be a cycle of length $n$. If the oriented Markov matrix has more than one non-zero entry on the main diagonal, then $L_\theta$ has periodic points of all least periods. These periodic points can all be chosen so that they have negative orientation.

\end{theorem}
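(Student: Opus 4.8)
The plan is to reduce the statement to Lemma \ref{horseshoe}: I will show that the hypotheses force three points $a,b,c$ with $f(b)\le c<b<a=f(a)\le f(c)$ for $f=L_\theta$ (allowing a reflection of the interval, which swaps the roles of left and right but preserves every orbit and every orientation). Once such $a,b,c$ are produced, Lemma \ref{horseshoe} delivers periodic points of all least periods, all of negative orientation, which is precisely the conclusion.

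First I would pin down what the diagonal of the oriented Markov matrix records. Because $L_\theta$ is affine on each edge $E_i$, the image $L_\theta(E_i)$ is a single interval, so $E_i$ carries at most one self-loop; hence every diagonal entry lies in $\{-1,0,1\}$, equal to $+1$ when $E_i\subseteq L_\theta(E_i)$ orientation-preservingly (a fixed point of positive orientation in $E_i$) and $-1$ when the inclusion is orientation-reversing. Since $\theta$ is an $n$-cycle it fixes no integer, so Theorem \ref{trace} gives trace $=-1$. Writing $p$ and $q$ for the numbers of $+1$'s and $-1$'s, we have $p-q=-1$; since $p+q$ has the same parity as $p-q$, it is odd, and $p+q>1$ forces $p+q\ge 3$, hence $p\ge 1$ and $q\ge 2$. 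Thus some edge $E_{+}$ carries a positive self-loop and some edge $E_{-}$ carries a negative self-loop, i.e. $L_\theta$ has a fixed point of each orientation.

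The heart of the argument, and the main obstacle, is to convert the coexistence of a positive and a negative fixed point into an actual horseshoe. This step genuinely needs the single-cycle hypothesis: a general continuous map can have fixed points of both orientations and no horseshoe whatsoever (an increasing homeomorphism with $f(x)-x$ changing sign repeatedly). The extra ingredient I would invoke is that the Markov graph of the connect-the-dots map of a cyclic permutation is strongly connected, so there are directed paths $E_-\rightsquigarrow E_+$ and $E_+\rightsquigarrow E_-$. Read back on the interval, these paths produce, on one side of the positive fixed point $a$, a subinterval whose image overshoots above $a$ together with an intervening subinterval whose image undershoots below it, so that the graph of $L_\theta$ folds on a single side of $a$; this is exactly the data $f(c)\ge a$ and $f(b)\le c$. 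Concretely I expect $c$ to sit near a point where $L_\theta$ attains an extreme value it is forced to reach (the global maximum $n$ or minimum $1$, attained since $\theta$ is onto), with $b$ at the dip coming from the oppositely oriented self-loop and the negative crossing between $c$ and $b$ being the fixed point supplied by $E_-$.

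I expect the sign bookkeeping, and the verification that the overshoot and the dip occur on the \emph{same} side of $a$ rather than on opposite sides (where no fold forms), to be the delicate part; this is exactly where strong connectivity of the cyclic Markov graph is needed, rather than only local information about the two crossings. As a fallback that avoids naming $a,b,c$, one can instead imitate the rearrangement-of-prime-loops argument of Lemma \ref{2kr}: the paths $E_-\rightsquigarrow E_+\rightsquigarrow E_-$ together with the two self-loops of opposite sign let one assemble, for each length $m$, a closed walk through both $E_+$ and $E_-$ whose sign is made negative via the parity of the negative self-loop and whose two distinct prime sub-loops keep it non-repetitive; by Theorem \ref{ompower} and the usual interval construction this again yields, for every $m$, a periodic point of least period $m$ with negative orientation.
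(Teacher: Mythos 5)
Your reduction to Lemma \ref{horseshoe} is the right strategy (it is the paper's strategy), and your opening bookkeeping is correct and even slightly sharper than the paper's: each diagonal entry of the oriented Markov matrix lies in $\{-1,0,1\}$, so trace $-1$ together with more than one non-zero entry forces at least one $+1$ and at least two $-1$'s. But the proof stops exactly where the theorem's content begins. You explicitly defer ``the delicate part'' --- producing the points $a,b,c$ with $f(b)\le c<b<a=f(a)\le f(c)$ --- and the ingredient you propose to bridge the gap is false: the Markov graph of the connect-the-dots map of a cyclic permutation need \emph{not} be strongly connected. For $\theta=(1,3,2,4)$ one has $L_\theta(E_1)=E_3$, $L_\theta(E_3)=E_1$, and $L_\theta(E_2)=E_2\cup E_3$, so $E_2$ is unreachable from $E_1$. (This $\theta$ has only one non-zero diagonal entry, so it does not contradict Theorem \ref{2diag}; but it does refute the general connectivity lemma you invoke, and you would have to prove connectivity under the theorem's hypothesis, which you have not done.) Your fallback has an additional independent defect: from paths $E_-\to\cdots\to E_+\to\cdots\to E_-$ of combined length $\ell$ plus the two self-loops you can only assemble negative non-repetitive closed walks of length at least about $\ell$; since each vertex carries at most one self-loop, there is no way to reach \emph{all} lengths (already length $2$ fails) unless $E_-$ and $E_+$ are joined by single edges in both directions --- which is precisely the horseshoe configuration you were trying to avoid constructing.

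For comparison, the paper needs no connectivity of the Markov graph. It takes the two leftmost crossings $z<a$ of the graph of $L_\theta$ with the diagonal (the first necessarily negative, the second positive), lets $b$ be the integer in $(z,a)$ where $L_\theta$ attains its minimum on $[z,a]$, and uses only the fact that the orbit of $b$ is the entire cycle $\{1,\dots,n\}$, so some iterate of $b$ exceeds $a$. If no point $c\in[L_\theta(b),z)$ had $L_\theta(c)\ge a$, the interval $[L_\theta(b),a]$ would be invariant and would trap the orbit of $b$, a contradiction; the $c$ so obtained, together with $b$ and $a$, satisfies the hypotheses of Lemma \ref{horseshoe}. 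That trapping argument is the step your proposal is missing.
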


\begin{proof}
Suppose that the oriented Markov matrix has more than one non-zero entry on the main diagonal, then the graph of $L_\theta$ will cross the line $y=x$ more than once. Let the leftmost intersection of the graph of $L_\theta$ with $y=x$ be denoted by $z$ and let the next intersection be denoted by $a$. Clearly, $L_\theta$ has negative orientation at $z$ and positive at $a$. Let $b$ denote the $x$-coordinate where $L_\theta$ achieves its absolute minimum on the interval $[z,a]$. Since $L_\theta$ is a connect-the-dot map, $b$ must be an integer in $\{1, 2, \dots, n\}$. This means that some iterate of $b$ must be greater than $a$. It is also clear that $L_\theta(b)<b$. 

 Consider the interval $[b,a]$. This gets mapped onto the interval $[L_\theta(b),a]$ under $L_\theta$. Now consider the image of $[L_\theta(b),a]$ under $L_\theta$. Points in the half-open interval $[L_\theta(b),z)$ get mapped to the right,  and points in the open interval $(z, a)$ get mapped to the left. So the leftmost point of $[L_\theta(b),a]$ under $L_\theta$ is $L_\theta(b)$. If there is no point in $[L_\theta(b),z)$ that gets mapped to the right of $a$ then $[L_\theta(b),a]$ must get mapped onto itself under $L_\theta$, but this would mean that no iterate of $b$ could be greater than $a$. So there must be some point $c$ satisfying $L_\theta(b)\leq c <z$ with $L_\theta(c)>a$. Lemma \ref{horseshoe} completes the proof.

\end{proof}

We are now in a position to complete the proof of Sharkovsky's theorem. The last result that we need is:

\begin{lemma}\label{realshark}
If $n=2^kr$ where $k \geq 0$ and $r>1$ is a positive odd integer then $L_\theta$ has a periodic point of least period $2^{k+1}3$.

\end{lemma}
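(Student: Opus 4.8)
The plan is to reduce the claim to a single period-six statement for the iterate $L_\theta^{2^k}$ and then to produce that period-six point from the horseshoe machinery. The bridge is a purely arithmetic observation that I would state as a small lemma: if a point $q$ has least period $6$ under $g:=L_\theta^{2^k}$, then its least period $\pi$ under $L_\theta$ satisfies $\pi \mid 6\cdot 2^k = 2^{k+1}3$ while the $g$-period $\pi/\gcd(\pi,2^k)$ equals $6$. Writing $\gcd(\pi,2^k)=2^j$ forces $\pi=2^{j+1}3$, and then $\gcd(2^{j+1}3,2^k)=2^{\min(j+1,k)}$ must again equal $2^j$, so $\min(j+1,k)=j$; since $j\le k$ this gives $j=k$ and hence $\pi=2^{k+1}3$ exactly. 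Thus it suffices to exhibit a point of least period $6$ for $g=L_\theta^{2^k}$.

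Next I would exploit the structure of $g$. Since $L_\theta^{2^k}$ permutes the orbit by $\theta^{2^k}$, and $\theta$ is an $n$-cycle with $n=2^kr$ and $r$ odd, the permutation $\theta^{2^k}$ splits into $2^k$ disjoint $r$-cycles. I fix one such block $B=\{\beta_1<\dots<\beta_r\}$, carrying an $r$-cycle $\sigma$, and use the intervals $[\beta_i,\beta_{i+1}]$ as basic intervals. Because $g$ is continuous and $g(\beta_i),g(\beta_{i+1})\in B$, the Intermediate Value Theorem gives that $g([\beta_i,\beta_{i+1}])$ contains the closed interval with endpoints $g(\beta_i)$ and $g(\beta_{i+1})$; hence every covering relation, with its orientation, of the connect-the-dots map $L_\sigma$ of the odd cycle $\sigma$ is also present for $g$ on these intervals. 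Consequently a non-repetitive, negatively oriented closed walk of length $6$ in the oriented Markov graph of $L_\sigma$ transfers to the same kind of walk for $g$, and (as throughout the paper) negative orientation guarantees that the resulting fixed point of $g^6$ is interior and of least period exactly $6$.

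This reduces everything to an odd base case: the oriented Markov graph of the connect-the-dots map of an odd $r$-cycle ($r\ge 3$) contains a non-repetitive, negatively oriented closed walk of length $6$. Here I would apply Theorem \ref{2diag} to the single cycle $\sigma$. If its oriented Markov matrix has more than one non-zero diagonal entry, then $L_\sigma$ has periodic points of all least periods with negative orientation, and the conversion lemma preceding Lemma \ref{horseshoe} turns the period-six point into the desired length-six walk; this settles that case. By Theorem \ref{trace} the trace is $-1$, so the only remaining possibility is a single diagonal entry equal to $-1$, that is, a primary (\v{S}tefan-type) cycle with one negative self-loop.

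The single-diagonal-entry primary case is where I expect the real work to lie. One still has a negative self-loop of length $1$ at some vertex and, because $\sigma^r$ is the identity (Theorems \ref{theta} and \ref{identity}), a positive closed walk of length $r$ at every vertex; for $r\in\{3,5\}$ these combine at once into a non-repetitive negative walk of length $6$ (indeed for $r\le 5$ the conclusion already follows from Lemma \ref{2kr} with $s=6>r$), but for $r\ge 7$ the length-$r$ loop overshoots and one must instead locate genuinely shorter negative loops. The plan is to write down the Markov graph of the primary cycle explicitly and read off short prime loops at a common vertex whose lengths can be assembled, with an odd number of negative factors, into total length $6$. Verifying that such short loops always exist at a common vertex and that the assembled walk is non-repetitive is the main obstacle; this is precisely the combinatorial heart of Sharkovsky's theorem. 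Granting it, the block transfer of the second paragraph and the arithmetic reduction of the first deliver a point of least period $2^{k+1}3$ for $L_\theta$.
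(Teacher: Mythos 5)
Your reduction is sound as far as it goes: the arithmetic in your first paragraph correctly shows that a point of least period $6$ for $g=L_\theta^{2^k}$ has least period exactly $2^{k+1}3$ under $L_\theta$, the transfer of covering relations (with orientations) from the block cycle $\sigma$ to $g$ is legitimate, and your first case (more than one nonzero diagonal entry, via Theorem \ref{2diag}) together with the $r\in\{3,5\}$ observation are fine. But there is a genuine gap exactly where you write ``Granting it'': the single-diagonal-entry case with $r\geq 7$ is the substance of the lemma, and you have not supplied an argument for it. Two concrete problems with the plan you sketch. First, a cycle whose oriented Markov matrix has exactly one nonzero diagonal entry need not be a \v{S}tefan (primary) cycle, so ``write down the Markov graph of the primary cycle explicitly'' does not exhaust the case you are in. Second, even for the genuine \v{S}tefan cycle of period $r\geq 9$, the prime loops based at the vertex carrying the negative self-loop have lengths $1$ and at least $r-1\geq 8$, so there is no non-repetitive closed walk of length $6$ at that vertex at all; the length-$6$ walk has to be found elsewhere in the graph, and proving that suitable short loops exist at some common vertex is precisely the open step.

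The paper closes this case without any structural analysis of the cycle, by a parity argument on the diagonal one level up, and this is the idea your proposal is missing. It splits on the diagonal of the oriented Markov matrix of $\theta^{2^{k+1}}$ (not of the odd block). If that diagonal has more than one nonzero entry, Theorem \ref{2diag} applied to $L_{\theta^{2^{k+1}}}$ yields a negatively oriented period-$3$ orbit, whose least $L_\theta$-period is $2^j3$ with $j\leq k+1$, and Lemma \ref{2kr} then forces $2^{k+1}3$. Otherwise the unique nonzero diagonal entry is $-1$ (the trace is $-1$ by Theorems \ref{theta} and \ref{trace}). Since the trace of $OM^{2^k}$ is also $-1$, there is a negative closed walk of length $2^k$ at some vertex $E_j$; traversed twice it is a positive closed walk of length $2^{k+1}$ at $E_j$, contributing $+1$ to the $(j,j)$ entry of $OM^{2^{k+1}}$, which is $0$ or $-1$; hence there must also exist a negative closed walk of length $2^{k+1}$ at $E_j$. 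These two length-$2^{k+1}$ walks at $E_j$ involve at least two distinct prime loops, and the rearrangement device from the proof of Lemma \ref{2kr} assembles three of these length-$2^{k+1}$ walks into a non-repetitive, negatively oriented closed walk of length $2^{k+1}3$. Without this step, or an equivalent complete analysis of your $r\geq 7$ case, the proposal does not establish the lemma.
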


\begin{proof}
The permutations $\theta^{2^k}$ and $\theta^{2^{k+1}}$ do not fix any of the integers in \\ $\{1, 2, \dots, n\}$, so the traces of the Oriented Markov Matrices of $L_\theta^{2^k}$ and $L_\theta^{2^{k+1}} $ are both $-1$. 

If the main diagonal of the Oriented Markov Matrix of $\theta^{2^{k+1}}$ has more than one non-zero entry on its main diagonal then Theorem \ref{2diag} tells us that $L_{\theta^{2^{k+1}}}$ has a periodic point of period three with negative orientation. Let $a$ denote an element of this orbit. Then we know that $a$ must be fixed by $L_{\theta^{2^{k+1}}}^3$ and so that the least period of $a$ with respect to $L_\theta$ must divide $2^{k+1}3$ and that it must also be divisible by $3$. This means that the least period of $a$ is $2^r3$ for some $r\leq k+1$. Lemma \ref{2kr} completes the proof in this case.

We now consider the case when the  Oriented Markov Matrix of $\theta^{2^{k+1}}$ only one non-zero entry on its main diagonal. Since the trace is $-1$ we know that this entry must be $-1$. We also know that the trace of the Oriented Markov Matrix of $\theta^{2^{k}}$ is $-1$. So there is a closed walk of length $2^k$ in the oriented Markov graph of $L_\theta$ with negative orientation. Let $E_j$ be a vertex in the Markov graph that is part of this closed walk. Notice that repeated twice this walk becomes a (repetitive) closed walk of length $2^{k+1}$ with positive orientation. This means that it contributes $+1$ to the main diagonal of the Oriented Markov Matrix of $\theta^{2^{k+1}}$. Since there is only one non-zero entry on the main diagonal and it is $-1$ there must be another closed walk of length $2^{k+1}$ from $E_j$ with negative orientation. These two closed walks from $E_j$ to itself must involve at least two prime closed walks.

As in the proof of Lemma $3$, we can use these prime walks to construct a non-repetitive closed walk of length $2^{k+1}3$ with negative orientation.
So $L_\theta$ has a periodic point of least period $2^{k+1}3$.

\end{proof}

\section{Second digression}

Earlier we showed that the trace arguments extend to maps on trees and to maps on graphs in certain cases. We now give an example to show that the results in the previous section do not extend to these cases. Consider the map on the tree shown in Figure 10 that maps the vertices according to the permutation $(1,2,3,4,5,6,7,8,9)$. The  Markov Graph is shown in Figure 11.

\begin{centering}
\begin{figure}
\begin{tikzpicture}[scale=5]

\draw[thick] (0,0) to node[below, font=\tiny]{$E_2$ }(.5 ,0);
\draw[thick] (.5,0) to node[below, font=\tiny]{$E_1$ }(1 ,0);
\draw[thick] (1,0) to node[below, font=\tiny]{$E_3$ }(1.5 ,0);
\draw[thick] (1,0) to node[left, font=\tiny]{$E_5$ }(1 ,.3);
\draw[thick] (0,0) to node[left, font=\tiny]{$E_6$ }(0 ,.3);
\draw[thick] (.5,0) to node[left, font=\tiny]{$E_4$ }(.5 ,.3);
\draw[thick] (1.5,0) to node[left, font=\tiny]{$E_7$ }(1.5 ,.3);
\draw[thick] (.5,0) to node[left, font=\tiny]{$E_8$ }(.5 ,-.3);

  \draw[fill=red] (0, 0) circle (.3pt);
    \draw[fill=red] (.5, 0) circle (.3pt);
      \draw[fill=red] (1, 0) circle (.3pt);
        \draw[fill=red] (1.5, 0) circle (.3pt);
        \draw[fill=red] (1, .3) circle (.3pt);
           \draw[fill=red] (0, .3) circle (.3pt);
              \draw[fill=red] (.5, .3) circle (.3pt);
                 \draw[fill=red] (1.5, .3) circle (.3pt);
                    \draw[fill=red] (.5, -.3) circle (.3pt);
        
        \draw {(0,-.05)} node[blue, font=\tiny]{$3$};
        
        \draw {(.45,-.05)} node[blue, font=\tiny]{$1$};
         \draw {(1,-.05)} node[blue, font=\tiny]{$2$};
          \draw {(1.5,-.05)} node[blue, font=\tiny]{$4$};
    \draw {(-.05,.3)} node[blue, font=\tiny]{$7$};
      \draw {(.45,.3)} node[blue, font=\tiny]{$5$};
        \draw {(.95,.3)} node[blue, font=\tiny]{$6$};
          \draw {(1.45,.3)} node[blue, font=\tiny]{$8$};
             \draw {(.45,-.3)} node[blue, font=\tiny]{$9$};

 \end{tikzpicture}
 \caption{Tree map}
\end{figure}
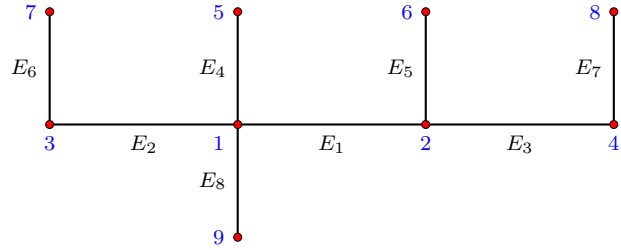
\end{centering}

\begin{centering}
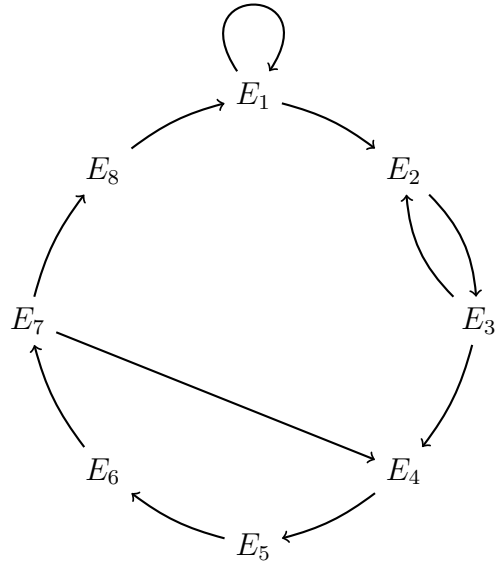
\begin{figure}
\begin{tikzpicture}[scale=5]

\draw{(0,0)} node (5) {$E_5$};
\draw{(-.4,.2)} node (6) {$E_6$};
\draw{(.4,.2)} node (4) {$E_4$};
\draw{(-.6,.6)} node (7) {$E_7$};
\draw{(.6,.6)} node (3) {$E_3$};
\draw{(-.4,1)} node (8){$E_8$};
\draw{(.4,1)} node (2){$E_2$};
\draw{(0,1.2)} node (1){$E_1$};

\draw[thick, ->] (1) to [bend left=11] (2);
\draw[thick, ->] (2) to [bend left=20] (3);
\draw[thick, ->] (3) to [bend left=11] (4);
\draw[thick, ->] (4) to [bend left=11] (5);
\draw[thick, ->] (5) to [bend left=11] (6);
\draw[thick, ->] (6) to [bend left=11] (7);
\draw[thick, ->] (7) to [bend left=11] (8);
\draw[thick, ->] (8) to [bend left=11] (1);

\draw[thick, ->] (3) to [bend left=20] (2);
\draw[thick, ->] (7) to (4);
\draw[thick, ->] (1) .. controls (-.2,1.5) and (.2, 1.5)..(1);

 \end{tikzpicture}
 \caption{Markov Graph for tree map}
\end{figure}
\end{centering}
 \noindent Looking at the Markov Graph it is clear that there are no closed walks of length $6$. This means that the connect-the-dot maps has no periodic orbits with least period $6$. So Lemma \ref{realshark} does not hold if we change the underlying graph from being topologically an interval to being a general tree and so we cannot expect the Sharkovsky ordering to extend to more general cases.

 However, we can prove more about maps on trees, but before we can state the main theorem in this case, we have to describe the process of removing ones from the right.
 
\subsection{Removing ones from the right}\index{removing ones from right}

The process of removing ones from the right can be described as follows.

\begin{enumerate}
\item Write $n$ in binary.
\item Change the rightmost one in its expansion to zero. 
\item Repeat the process until you end with $0$.
\end{enumerate}

 For example, $31$ has binary expansion $11111$. Applying the process to this number yields the following binary expansions $11110$, $11100$, $11000$, $10000$ and $00000$, or in decimal notation $30$, $28$, $24$,  $16$ and $0$.

We can now state the theorem for general trees.

\begin{theorem} \label{t2}
Let $T$ be a tree with $v$ vertices. Let $f: T \rightarrow T$ be a map with the property that the vertices form one periodic orbit. Then:

\begin{enumerate}
\item If $v$ is not a divisor of $2^k$, then $f$ has a periodic point with least period $2^k$.  
\item If $v=2^pq$, where $q>1$ is odd and $p \geq 0$, then $f$ has a periodic point with least period $2^pr$ for any $r \geq q$.
\item The map $f$  also has periodic orbits of any least period $m$ where $m$ can be obtained from $v$ by removing ones from the right of the binary expansion of $n$ and changing them to zeros.
\end{enumerate}
\end{theorem}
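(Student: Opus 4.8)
The plan is to reduce all three parts to the single task of exhibiting non-repetitive closed walks of negative orientation in the Oriented Markov Graph of $L_\theta$, where $\theta$ is the $v$-cycle by which $f$ permutes the vertices: as established earlier, such a walk of length $m$ forces a point of least period $m$ for $f$, since the negative orientation places the fixed point of $f^m$ in the interior of an edge and so the endpoint ambiguity does not interfere, and this argument is valid on a tree. The engine for the whole proof is a pair of trace facts for a single cycle. Because $\theta$ is a $v$-cycle, $\theta^\ell$ is fixed-point-free exactly when $v\nmid\ell$ and equals the identity exactly when $v\mid\ell$; so the tree trace lemma together with the tree version of Theorem~\ref{theta} gives $\tr(OM^\ell)=-1$ whenever $v\nmid\ell$, while Theorem~\ref{identity} gives $OM^\ell=I$ whenever $v\mid\ell$. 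In particular the power sums $\sum_i\lambda_i^\ell$ of the eigenvalues of $OM$ agree with those of the nontrivial $v$-th roots of unity, so $OM$ has characteristic polynomial $1+x+\cdots+x^{v-1}=(x^v-1)/(x-1)$; I will use this in the hard step.

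Parts (1) and (2) are then the tree analogues of Lemmas~2 and~\ref{2kr}. For (1), if $v\nmid 2^k$ then $\tr(OM^{2^k})=-1$, so some vertex carries a closed walk of length $2^k$ with negative orientation; since $2^k$ is a power of two any proper sub-walk would be repeated an even number of times, so the walk is non-repetitive and yields a point of least period $2^k$ (this is Lemma~2 with the tree trace lemma in place of Theorem~\ref{trace}). For (2) the endpoint $r=q$ is realised by the vertex orbit itself, each vertex having least period $v=2^pq$, and every $r>q$ is supplied by the tree analogue of Lemma~\ref{2kr}; note that Lemma~\ref{2kr} imposes no parity on its conclusion, which is exactly why (2) holds for all $r\ge q$ and not merely for odd $r$.

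The new content is (3). Write $v=2^{a_1}+\cdots+2^{a_t}$ with $a_1>\cdots>a_t\ge 0$; removing ones from the right produces exactly the partial sums $m_j=2^{a_1}+\cdots+2^{a_j}$ for $1\le j\le t$ (with $m_t=v$ and $m_0=0$), so it suffices to build a non-repetitive negative closed walk of length $m_j$ for each $j$. The case $t=1$ is a pure power of two and carries no new period, so assume $t\ge 2$; then the odd part of $v$ exceeds $1$, hence $v\nmid 2^{a_j}$, and $\tr(OM^{2^{a_j}})=-1$ furnishes a vertex $E$ carrying a negative closed walk $W$ of length $2^{a_j}$. Observe that $m_j=2^{a_j}q_j$ with $q_j$ odd and that $m_{j-1}=m_j-2^{a_j}$ is divisible by $2^{a_j+1}$. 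The plan is to adjoin to $W$ a closed walk of length $m_{j-1}$ and positive orientation based at $E$, producing a closed walk of length $m_j$ of orientation $(-1)\cdot(+1)=-1$, and then to rearrange its prime closed sub-walks exactly as in the proof of Lemma~\ref{2kr}: grouping all copies of one prime sub-walk at the front destroys any periodicity and leaves a non-repetitive negative closed walk of length $m_j$.

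The \emph{main obstacle} is precisely the rearrangement hypothesis: it requires at least two distinct prime closed walks through $E$, i.e. the negative length-$2^{a_j}$ walk and the positive length-$m_{j-1}$ walk must not both be powers of a single prime closed walk (powers of one walk always recombine into a repetitive walk). In Lemma~\ref{2kr} this was automatic because the positive companion had length exactly $v$, available at every vertex from $OM^v=I$, and the odd factor ruled out a common root; here the target $m_j<v$ leaves no room to insert a length-$v$ walk, so the second prime must be found elsewhere. My plan for this crux is to read it off the characteristic polynomial: since $OM$ has characteristic polynomial $1+x+\cdots+x^{v-1}$ rather than $x^{v-1}\pm 1$, the Oriented Markov Graph is not a single directed cycle and must branch, and the Markov Graph of a single-cycle connect-the-dots map is strongly connected; together these should let me route a second, genuinely different prime closed walk through the basepoint of $W$. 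Arranging that this second walk has length \emph{exactly} $m_{j-1}$ with positive orientation, rather than merely some admissible length, is the delicate point and the step I expect to require the most work. It is also exactly where the interval-specific devices Theorem~\ref{2diag} and Lemma~\ref{realshark} break down, consistently with the period-six failure exhibited in the second digression.
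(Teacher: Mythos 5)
Your reductions for parts (1) and (2) follow exactly the route the paper indicates: the tree trace lemma replaces Theorem \ref{trace}, and the proofs of Lemma 2 and Lemma \ref{2kr} then transfer, the positive companion walk of length $v$ being available at every vertex because the Oriented Markov Matrix raised to the $v$th power is the identity. Bear in mind, though, that the paper itself states Theorem \ref{t2} without proof and defers to \cite{B2, B3}; parts (1) and (2) are essentially a restatement of Theorem 8, so all of the genuinely new content lives in part (3).

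Part (3) is where your argument has a real gap, precisely at the point you yourself flag as the delicate step. Your construction of a negative closed walk of length $m_j=m_{j-1}+2^{a_j}$ requires, at the \emph{same} vertex $E$ that carries the negative closed walk of length $2^{a_j}$, a \emph{positive} closed walk of length exactly $m_{j-1}$, together with a guarantee that the resulting collection contains at least two distinct prime closed walks. Neither tool you name can produce this. The trace identity $\tr(OM^{\ell})=-1$ for $v\nmid\ell$ (equivalently, the characteristic polynomial $1+x+\cdots+x^{v-1}$) controls only the \emph{signed} count of closed walks summed over all vertices; at exponent $m_{j-1}$ (and $0<m_{j-1}<v$) that signed count is $-1$, which is perfectly consistent with there being no positive closed walk of length $m_{j-1}$ anywhere in the graph, let alone one based at $E$. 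Strong connectivity yields walks between prescribed vertices but gives no control over their lengths or orientations. In Lemma \ref{2kr} the positive companion came for free because its length was $v$ and $OM^{v}=I$ places a positive closed walk of length $v$ at \emph{every} vertex; for lengths strictly below $v$ there is no such statement, and the nine-vertex tree of Figures 10 and 11, whose Markov graph has no closed walk of length $6$ at all, shows that intermediate lengths really can be absent. So the crux of part (3) remains open in your write-up: one needs a finer analysis of the loop structure of the Markov graph, which is what the cited papers \cite{B2, B3} actually supply, rather than anything that follows from the trace and eigenvalue data alone.
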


This ordering can be visualized, and is shown in Figure 12.

\begin{centering}
\begin{figure}
\begin{tikzpicture}[scale=4]

\draw{(2,.6)} node (3) {$3$};
\draw{(2,.4)} node (5) {$5$};
\draw{(2 ,.2)} node (7) {$7$};
\draw{(2 ,0)} node (9) {$9$};
\draw{(2 ,-.2)} node (11) {$11$};
\draw{(2 ,-.4)} node (13) {$13$};
\draw{(2 ,-.6)} node (15) {$15$};
\draw{(2 ,-.8)} node (17) {$17$};
\draw{(2 ,-1)} node (19) {$19$};
\draw{(2 ,-1.2)} node (21) {$21$};
\draw{(2 ,-1.4)} node (23) {$23$};
\draw{(2 ,-1.6)} node (25) {$25$};
\draw{(2 ,-1.8)} node (27) {$27$};
\draw{(2 ,-2)} node (29) {$29$};
\draw{(2 ,-2.2)} node (31) {$31$};

\draw{(1.5,.2)} node (6) {$6$};
\draw{(1.5,-.2)} node (10) {$10$};
\draw{(1.5,-.6)} node (14) {$14$};
\draw{(1.5 ,-1)} node (18) {$18$};
\draw{(1.5 ,-1.4)} node (22) {$22$};
\draw{(1.5 ,-1.8)} node (26) {$26$};
\draw{(1.5 ,-2.2)} node (30) {$30$};

\draw{(1,-.6)} node (12) {$12$};
\draw{(1,-1.4)} node (20) {$20$};
\draw{(1,-2.2)} node (28) {$28$};

\draw{(.5,-2.2)} node (24) {$24$};

\draw [thick, dotted] (31)  to (2,-2.7);
\draw [thick, dotted] (30)  to (1.5,-2.7);
\draw [thick, dotted] (28)  to (1,-2.7);
\draw [thick, dotted] (24)  to (.5,-2.7);

\draw [thick, dotted] (0,-2.5)  to (0,-2.7);
\draw [thick, dotted] (-.5,-2.5)  to (-.5,-2.7);

\draw[thick, ->] (7) to (6);
\draw[thick, ->] (11) to (10);
\draw[thick, ->] (15) to (14);
\draw[thick, ->] (19) to (18);
\draw[thick, ->] (23) to (22);
\draw[thick, ->] (27) to (26);
\draw[thick, ->] (31) to (30);
\draw[thick, ->] (30) to (28);

\draw[thick, ->] (14) to (12);
\draw[thick, ->] (22) to (20);
\draw[thick, ->] (23) to (22);

\draw[thick, ->] (28) to (24);

\draw[thick, ->] (3) to (5);
\draw[thick, ->] (5) to (7);
\draw[thick, ->] (7) to (9);
\draw[thick, ->] (9) to (11);
\draw[thick, ->] (11) to (13);
\draw[thick, ->] (13) to (15);
\draw[thick, ->] (15) to (17);
\draw[thick, ->] (17) to (19);
\draw[thick, ->] (19) to (21);
\draw[thick, ->] (21) to (23);
\draw[thick, ->] (23) to (25);
\draw[thick, ->] (25) to (27);
\draw[thick, ->] (27) to (29);
\draw[thick, ->] (29) to (31);

\draw[thick, ->] (6) to (10);
\draw[thick, ->] (10) to (14);
\draw[thick, ->] (14) to (18);
\draw[thick, ->] (18) to (22);
\draw[thick, ->] (22) to (26);
\draw[thick, ->] (26) to (30);

\draw[thick, ->] (12) to (20);
\draw[thick, ->] (20) to (28);

\draw{(-.8,-3)} node (1) {$1$};
\draw{(-.5,-3)} node (2) {$2$};
\draw{(-.2,-3)} node (4) {$4$};
\draw{(.1,-3)} node (8) {$8$};

\draw[thick, ->] (8) to (4);
\draw[thick, ->] (4) to (2);
\draw[thick, ->] (2) to (1);

\draw [thick, dotted] (8)  to (1,-3);

\draw [thick] (-.8, -2.8)  to (-.8,-2.9);
\draw [thick] (-.8, -2.9)  to (2,-2.9);
\draw [thick, ->] (2,-2.9)  to (2,-3);

\draw [thick, dotted] (-.8, -2.6)  to (-.6,-2.6);

 \end{tikzpicture}
 \caption{Ordering for trees}
\end{figure}
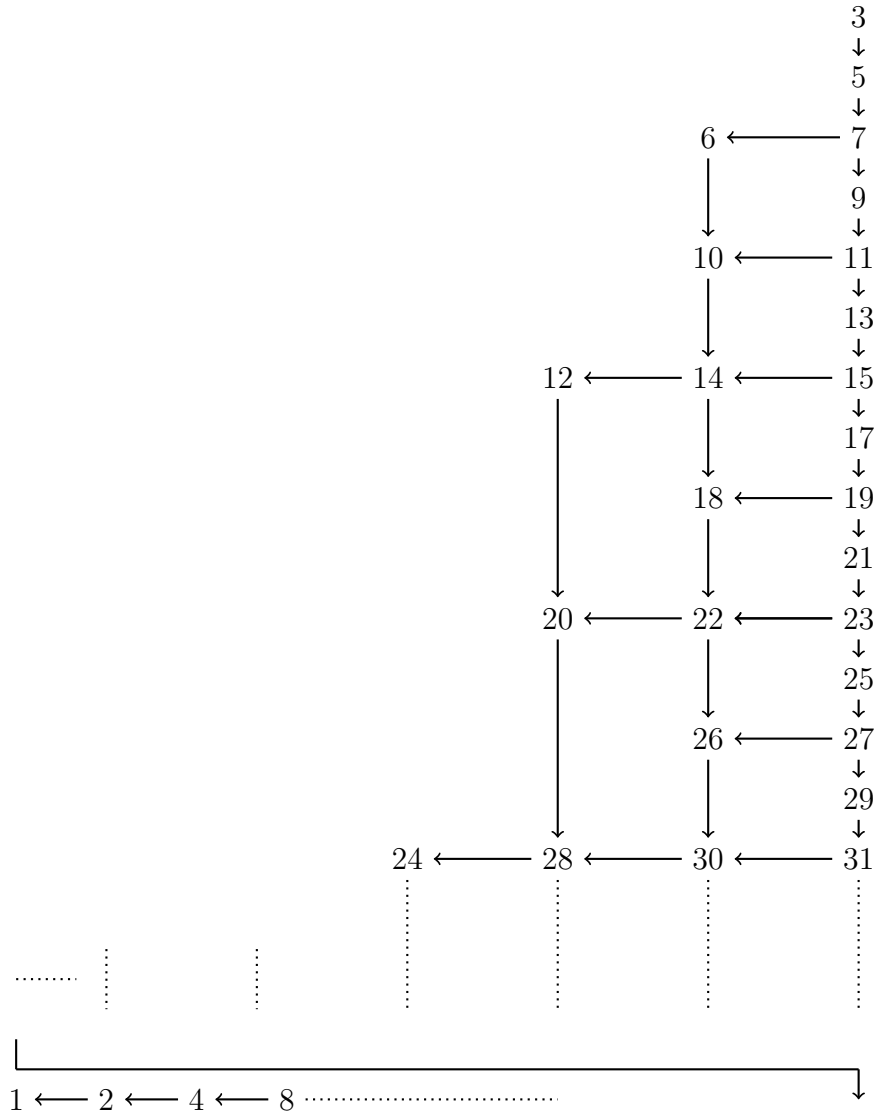
\end{centering}

\subsection{Converses}

As stated in the introduction, Sharkovsky also proved what is often called the converse, that given any positive integer, there exists a map from the reals to itself that has periodic points with only the least periods that are given by the ordering. In \cite{B2, B3} the converse result was also proved for Theorem \ref{t2}. However, as yet, there are no converse results to the theorems on graphs. It is an open question as to whether maps on graphs that are homotopic to the constant map have the tree ordering. Similarly it is an open question as to whether maps on graphs that are homotopic to the identity and flip an edge have the tree ordering.

\section{Concluding remarks}

The purpose of this paper was to give a proof of Sharkovsky's theorem, but also to show that there are many ways that these ideas can be generalized to maps on graphs. This leads to many questions that are amenable to undergraduate research. For example, what can be said when the vertices of a graph form more than one periodic orbit (see \cite{B4} for a beginning result in this area)? How do you construct a graph and map to have a prescribed set of periods? 

It should also be pointed out that the results involving the trace of matrices are closely related to the Lefschetz number of the map. Indeed, the examples in this paper seem to be nice introductory examples that could be used in a beginning algebraic topology course to show why the Lefschetz number is important and how it can be used. 

Finally, maps on trees and graphs can be studied in much more generality, though the arguments are much more intricate and generally not as accessible for undergraduates. See \cite{AJM} for a major result on trees and \cite{LL, LPR} for some results on graphs. For an introduction to combinatorial dynamics the book \cite{ALM} is the basic reference.


\end{document}